 \numberwithin{equation}{section}
\newcommand{\norm}[2]{\ensuremath{\|#2\|_{#1}}}
\newcommand{\Norm}[2]{\ensuremath{\Big\|#2 \Big\|_{#1}}}
\newcommand{\abs}[1]{\ensuremath{|#1|}}
\newcommand{\inner}[2]{\ensuremath{\langle #1,#2 \rangle}_{H}}
\newcommand{\biginner}[2]{\ensuremath{\Big\langle #1,#2 \Big\rangle}_{H}}
\newcommand{\innerv}[2]{\ensuremath{\langle #1,#2 \rangle}_{V}}
\newcommand{\inners}[3]{\ensuremath{\langle #1,#2 \rangle}_{#3}}
\newcommand{\duals}[4]{\ensuremath { \tensor[_{#3}]{\langle#1,#2\rangle 
}{_{#4}}  }}
\newcommand{\dual}[2]{\ensuremath { \tensor[_{V^*}]{ \langle#1,#2\rangle }{_V}  
  }}
\newcommand{\tdual}[2]{\ensuremath { \tensor[_{V}]{ \langle#1,#2\rangle
}{_{V^*}} }}
\newcommand{\bigtdual}[2]{\ensuremath { \tensor[_{V}]{ \Big\langle#1,#2
\Big\rangle }{_{V^*}} }}
\newcommand{\bigdual}[2]{\ensuremath { \tensor[_{V^*}]{ \Big\langle#1,#2
\Big\rangle }{_{V}} }}
\newcommand{\bR}{{\mathbb R}}
\newcommand{\bP}{{\mathbb P}}
\newcommand{\bE}{{\mathbb E}}
\newcommand{\dd}{{\mathrm{d}}}
\newcommand{\ee}{{\mathrm{e}}}
\newcommand{\cY}{\mathcal{Y}}
\newcommand{\cX}{\mathcal{X}}
\newcommand{\cL}{\mathcal{L}}
\newcommand{\cF}{\mathcal{F}}
\newcommand{\cU}{\mathcal{U}}
\newcommand{\cXb}{\ensuremath \mathcal{X}_0^{t,\beta}}
\newcommand{\cYb}{\ensuremath \mathcal{Y}_0^{t,\beta}}
\newcommand{\cXbT}{\ensuremath \mathcal{X}^{\beta}}
\newcommand{\cYbT}{\ensuremath \mathcal{Y}^{\beta}}
\newcommand{\Hb}{\ensuremath \dot{H}^{\beta}}
\newcommand{\AM}{\ensuremath A_{\max}}
\newcommand{\Am}{\ensuremath A_{\min}}
\newcommand{\sC}{\mathscr{C}}
\newcommand{\sL}{\mathscr{L}}
\newcommand{\sS}{\mathscr{S}}
\newcommand{\sT}{\mathscr{T}}
\newcommand{\sB}{\mathscr{B}}
\newcommand{\sG}{\mathscr{G}}
\newcommand{\sW}{\mathscr{W}}
\newcommand{\sF}{\mathscr{F}}
\newtheorem{theorem}{Theorem}
\newtheorem{lemma}[theorem]{Lemma}
\newtheorem{remark}[theorem]{Remark}
\newtheorem{definition}[theorem]{Definition}
\begin{document}

\title{A weak space-time formulation for the
linear stochastic heat equation}

\author[S.~Larsson]{Stig Larsson}

\address{
  Department of Mathematical Sciences,
  Chalmers University of Technology and University of Gothenburg, 
  SE--412 96 Gothenburg,
  Sweden}

\email{stig@chalmers.se}

\author[M.~Molteni]{Matteo Molteni}

\address{
  Department of Mathematical Sciences,
  Chalmers University of Technology and University of Gothenburg,
  SE--412 96 Gothenburg, 
  Sweden}

\email{molteni@chalmers.se}

\begin{abstract}
We apply the well-known Banach-Ne{\v{c}}as-Babu{\v{s}}ka inf-sup theory in a
stochastic setting to introduce a weak space-time formulation of the linear
stochastic heat equation with additive noise. We give sufficient conditions on
the the data and on the
covariance operator associated to the driving Wiener process, in order to have
existence and uniqueness of the solution. We show the relation of the
obtained solution to the mild solution and to the variational solution
of the same problem. The spatial regularity of the solution is also discussed.
Finally, an extension to the case of linear multiplicative noise is presented.
\keywords{Inf-sup theory \and Stochastic linear heat equation \and Additive
noise \and Linear multiplicative noise}
%\keywords{First keyword \and Second keyword \and More}
% \PACS{PACS code1 \and PACS code2 \and more}
\subjclass{MSC 60H15 \and MSC 35R60 }
% \subclass{MSC code1 \and MSC code2 \and MSC 65M60 \and more}
\end{abstract}

\date{\today}

\maketitle
\section{Introduction}\label{intro}
We consider a linear parabolic stochastic evolution problem of the form 
\begin{equation} \label{stocHeat}
\begin{aligned}
&\dd U(t) + A(t)U(t)\,\dd t = f(t)\,\dd t +\Psi(t)\,\dd W(t), &&t \in (0,T], \\
&U(0) = U_0. &&
\end{aligned}
\end{equation}
We assume that $A(t)$ is a random elliptic operator defined within a
Gelfand triple setting as follows. Given separable Hilbert spaces
$V,H$, we consider a Gelfand triple $V\subset H\subset V^*$, where $V$
has a compact and dense embedding into $H$. We denote by
$\inner{\cdot}{\cdot}$ the inner product in $H$ and by
$\dual{\cdot}{\cdot}$ the dual pairing between $V$ and $V^*$ with
$\dual{u}{v}=\inner{u}{v}, \, \forall v\in V$ whenever $u \in
H$. Further, we denote by $\sL(H) = \sL(H;H)$ the space of bounded linear
operators on $H$ and by $\sL_2(H)=\sL_2(H;H)$ the Hilbert-Schmidt operators.

Let $T \in (0,\infty)$ be fixed and let $(\Omega,\Sigma,\bP)$ be a
complete probability space, with normal filtration
$\Sigma = (\Sigma_t)_{t \in [0,T]}$.  We assume that a progressively
measurable map
$ A \colon \Omega \times [0,T] \times V \rightarrow V^*$, coercive and
bounded $ \bP \otimes \dd t$-a.s., is given, with associated bilinear
form $a$ given by $a(\omega,t;u,v) = \dual{A(\omega,t)u}{v}$. We
consider a predictable process with Bochner integrable trajectories
$f \in L^2( \Omega \times (0,T) ;V^*)$ and we assume that
$W = (W(t))_{t \in [0,T]}$ is a $Q$-Wiener process with covariance
operator $Q$, and a predictable operator-valued process $\Psi$ such
that $\Psi{Q}^{\frac{1}{2}} \in L^2(\Omega \times (0,T);\sL_2(H))$.

A typical example would be $V=H^1_0(D)\subset H=L^2(D)$ with a spatial
domain $D$ and a random elliptic operator of the form
$A(\omega,s)u=-\nabla\cdot(a(\omega,s)\nabla u)+b(\omega,s)\cdot\nabla
u+c(\omega,s)u$ with suitable assumptions on the coefficients.

In order to give a meaning to \eqref{stocHeat}, we have to define what
we mean by a solution. In the special case when $A$ is independent of
$t$ and $\omega$ and considered as unbounded operator in $H$, we have
the concepts of {\it weak} and {\it mild solution}, see \cite{DapZab}.
\begin{definition}[Weak and mild solution]
  Let the operator $A$ be possibly unbounded, independent of $\omega$
  and $t$, and defined on a certain domain $D(A)$ dense in
    $H$, i.e., $A \colon D(A)\subset H \rightarrow H$. A weak
  solution to \eqref{stocHeat} is an $H$-valued, predictable
  stochastic process $U(t)$, which is Bochner integrable $\bP$-a.s.\
  and satisfies
\begin{equation}\label{weak}
\begin{aligned}
\inner{U(t)}{v} &= \inner{U_0}{v} - \int_0^t{\inner{U(s)}{A^*v}}\,\dd
s  + \int_0^t{\inner{f(s)}{v}}\,\dd s \\ &\quad + \int_0^t{\inner{\Psi(s)\,\dd
W(s)}{v}},\quad \bP\mbox{\rm{-a.s.}},\,\forall v \in D(A^*),\,t \in [0,T].
\end{aligned}
\end{equation}
In particular $-A$ is the generator a strongly continuous semigroup
$(S(t))_{t\geq 0}$ in $H$ and $\int_0^T{\norm{\sL_2(H)}{S(s)\Psi(s)
Q^{\frac{1}{2}}}^2}\,\dd s < \infty,$ so that the unique weak solution coincides
with the mild solution, given by the formula
\begin{equation}\label{mild}
U(t) =  S(t)U_0 + \int_0^t {S(t-s)f(s)}\,\dd s + \int_0^t{S(t-s)\Psi(s)\,\dd 
W(s)},
\quad t \in [0,T].
\end{equation}
\end{definition} 
Within the semigroup framework
it is possible to prove results about about spatial regularity and temporal
H\"{o}lder-continuity of the solution, by defining spaces of fractional
order, $\dot{H}^{\beta}:=D(A^{\frac{\beta}{2}}) $, and exploiting the semigroup
theory. For example, in the parabolic case, when the semigroup is analytic, it
was shown in \cite{Yan} that if $U_0 \in L^2(\Omega;\dot{H}^{\beta})$,
$f=0$, $\Psi(t)=I$, 
and $ \norm{\sL_2(H)}{A^{\frac{\beta-1}{2}} Q^{\frac{1}{2}}} <\infty$ for some
$\beta \geq 0$, then the mild solution satisfies
\begin{equation*}
\norm{L^2(\Omega;\dot{H}^{\beta})}{U(t)} \leq
C\Big(\norm{L^2(\Omega;\dot{H}^{\beta})}{U_0} + 
\norm{\sL_2(H)}{A^{\frac{\beta-1}{2}}Q^{\frac{1}{2}}} \Big), \quad t\in[0,T].
\end{equation*}

The concept of mild solution presents however the disadvantage of not being
applicable whenever the operator does not generate a semigroup. This fact
provides a good reason to look for more general concepts of solution that do not
rely at all on such a theory. 

For this purpose we recall that, in order to derive the mild solution formula
\eqref{mild}, \cite{DapZab} proceeds from the weak
formulation \eqref{weak} with time-independent deterministic test
functions, to a weak formulation with time-dependent deterministic
test functions, c.f.~Lemma~\ref{lemma:versions} below:
\begin{equation*}
\begin{aligned}
\inner{U(t)}{v(t)} &= \inner{U_0}{v(0)} +
\int_0^t{\inner{U(s)}{\dot{v}(s) - A^*v(s)}}\,\dd s \\ & \quad +
\int_0^t{\inner{f(s)}{v(s)}}\,\dd s + \int_0^t{\inner{\Psi(s)\,\dd W(s)}{v(s)}}.
\end{aligned}
\end{equation*}
This suggests the possibility of using a weak space-time formulation, which
would be to find a pair $(U_1,U_2)$ such that 
\begin{equation*}
\begin{aligned}
&\int_0^t{\inner{U_1(s)}{-\dot{v}(s) + A^*v(s)}}\,\dd s +  \inner{U_2}{v(t)}\\ 
&\qquad= \inner{U_0}{v(0)} + \int_0^t{\inner{f(s)}{v(s)}}\,\dd s +
\int_0^t{\inner{\Psi(s)\,\dd W(s)}{v(s)}},
\end{aligned}
\end{equation*}
for all $v$ in a suitable class of test functions.

With a proper choice of function spaces, the well-posedness of this
problem in the deterministic setting is obtained within the
Banach-Ne{\v{c}}as-Babu{\v{s}}ka inf-sup theory, see Section
\ref{preliminary} below.
%%%%%% CHANGE THIS AND WRITE MORE ABOUT THE CHALLENGE OF FITTING THE NOISE
In Section \ref{varformstoch} we extend this
to the stochastic evolution problem \eqref{stocHeat}. The equation is
solved $\omega$-wise and the inf-sup theory allows us to prove that a
solution exists, is unique, and satisfies a bound that is expressed in
terms of the data $U_0$, $f$, $\Psi$, and $W$, $\bP$-a.s. By taking the
expectation of this, we achieve a standard estimate for the norm of
the solution in the space $L^2( \Omega \times (0,T) ;V)\cap
L^2(\Omega;\sC([0,T];H))$, which is consistent with standard estimates
presented, for example, in \cite[Chapt.~5]{Chow}. In particular, under
suitable assumptions, our solution coincides with the mild solution.
In Section \ref{Regularity}, we briefly discuss the spatial regularity
under such assumptions.

A more general solution concept is the {\it variational
  solution}, for which a comprehensive theory can be found, for
example, in \cite[Chapt.~4]{PrevotRockner}. This theory applies to
more general quasilinear equations, but we present it here for our
linear equation.  

\begin{definition}[Variational solution]\label{varsol}
  Assume that $\Psi$ and $Q$ are as before, that is to say,
  $\Psi Q^{\frac12}\in L^2( \Omega \times (0,T) ;\sL_2(H))$. A
  continuous $H$-valued $\Sigma$-adapted process
  $(U(t))_{t \in [0,T]}$ is called a variational solution to
  \eqref{stocHeat}, if for its $ \bP \otimes \dd t $ equivalence class
  $\hat{U}$ we have
  $\hat{U} \in L^2(\Omega \times (0,T) , \bP \otimes \dd t ;V)$ and,
  for any $t \in [0,T]$,
\begin{equation*} 
U(t) = U_0 - \int_0^t{A(s)\bar{U}(s)}\,\dd s+ \int_0^t{f(s)}\,\dd s+
\int_0^t{\Psi(s)\,\dd W(s)},\quad \text{$\bP$-a.s.,}
\end{equation*}
where $\bar{U}$ is any $V$-valued progressively
measurable $ \bP \otimes \dd t$ version of $\hat{U}$.
\end{definition}

We show in Lemma~\ref{lemma:versions} that our solution coincides with
such a solution, in particular, that our $U_1$ and $U_2$ play the
roles of the $\bar{U}$ and $U$, respectively.  

Finally, the norm bound that we obtain for the solution operator of
the linear problem with additive noise allows us to use a standard
fixed point technique and extend our theory to the case of
multiplicative noise. In Section \ref{linmultnoise} we present this in
the case of linear multiplicative noise.  This approach extends to
semilinear equations under appropriate global Lipschitz assumptions.

We want to remark that despite the fact that the concept of solution that we 
present is essentially no more general than the ones already known, it presents 
two advantages. It allows in fact the development of a theory for existence 
and uniqueness that is relatively easier than others and it states the problem 
in a way that can naturally be used for Petrov-Galerkin approximation of the 
problem. For this second reason our work can be seen as the potential 
starting point for future works dealing with numerical solutions for 
\eqref{stocHeat}.

\section{Preliminaries} \label{preliminary}
\subsection{The inf-sup theory}\label{preliminary1}
We recall the Banach-Ne{\v{c}}as-Babu{\v{s}}ka (BNB) theorem, see
\cite{BabuskaAziz,Ern}, for example.  Let $V$ and $W$ be Banach
spaces, $W$ reflexive, and consider a bounded bilinear form $\sB
\colon W \times V \rightarrow \bR $, with
\begin{equation}
\tag{BDD} \label{BDD}
C_B : =  \sup_{0 \neq w \in W}\sup_{0 \neq v \in
V}\frac{\sB(w,v)}{\norm{V}{v}\norm{W}{w}} < \infty,\\
\end{equation}
and the associated bounded linear operator $B \colon W \rightarrow
V^*$, i.e., $B\in\sL(W,V^*)$, defined by $$ \duals{Bw}{v}{V^*}{V} :=
\sB(w,v) , \quad \forall w\in W, \forall v \in V.$$ The operator $B$
is boundedly invertible if and only if the following conditions are
satisfied:
\begin{align}
\tag{BNB1} \label{BNB1A}
&c_B :=  \inf_{0 \neq w \in W}\sup_{0 \neq v \in
V}\frac{\sB(w,v)}{\norm{V}{v}\norm{W}{w}} >0,\\
\tag{BNB2} \label{BNB2A}
&\forall 0 \neq v \in V, \quad \sup_{0 \neq w \in W}\sB(w,v) >0.
\end{align} 

The constant $c_B$ is called the inf-sup constant and, whenever both $V$ and $W$
are reflexive and \eqref{BNB1A} holds, we have the identity
\begin{align}\label{interchangespaces}
\inf_{0 \neq w \in W}\sup_{0 \neq v \in
V}\frac{\sB(w,v)}{\norm{V}{v}\norm{W}{w}} = \inf_{0 \neq v \in V}\sup_{0 \neq w
\in W}\frac{\sB(w,v)}{\norm{V}{v}\norm{W}{w}},
\end{align}
which allows to swap the spaces where the infimum and the supremum are taken.

An immediate consequence of this is that the variational problem:
\begin{itemize}
\item[]
\quad given $F \in V^*$, find $w \in W \colon \sB(w,v) = F(v), \quad \forall 
v\in V,$
\end{itemize}
i.e., solve $Bw=F$ in $V^*$, and its adjoint: 
\begin{itemize}
\item[]
\quad given $G \in W^*$, find $v \in V\colon \sB(w,v) = G(w), \quad \forall w\in 
W,$
\end{itemize}
i.e., solve $B^*v=G$ in $W^*$, are well-posed whenever \eqref{BDD},
\eqref{BNB1A} and \eqref{BNB2A} hold. In particular, the
well-posedness of the former is equivalent to the well-posedness of
the latter and the respective solutions satisfy
\begin{align*}
\norm{W}{w} \leq \frac{1}{c_B}\norm{V^*}{F}, \quad \norm{V}{v} \leq
\frac{1}{c_B}\norm{W^*}{G}.
\end{align*}

\subsection{The inf-sup theory applied to an abstract parabolic
problem}\label{preliminary2}
In recent years there has been a renewed interest for the tools
presented above in order to deal with the linear heat equation
starting from an abstract parabolic equation given in the Gelfand
triple framework (see, for example,
\cite{BabuskaJanik,CheginiStev,SchwabSte,SchwabSuli,Fra,UrbanPatera}). Assume
indeed that Hilbert spaces $V, H$ are given, forming a Gelfand triple
$ V \subset H \subset V^*$ with bilinear forms
\begin{align*} 
a(t\,;\cdot,\cdot) \colon V \times V \rightarrow \bR,\quad t\in [0,T], 
\end{align*}
satisfying the following conditions for some positive numbers $\Am, \AM$:
\begin{equation}\label{eq:bounds_on_A}
\begin{aligned}
& \abs{a(t;u,v)} \leq \AM \norm{V}{u} \norm{V}{v},\quad && t
\in [0,T],\ u,v\in V, \\
& a(t;v,v) \geq \Am \norm{V}{v}^2, \quad && t \in [0,T],
\ v\in V.
\end{aligned}
\end{equation}
For every $t \in [0,T]$, let $ A(t)$ be the bounded linear operator from $V$ to
$V^*$ associated with the bilinear form, i.e., $A(t) \in \cL(V,V^*)$ and
\begin{align*}
\dual{A(t)u}{v} = a(t;u,v) = \tdual{u}{A^*(t)v}.
\end{align*}

Consider now the problem
\begin{equation} \label{equationstrongform}
\begin{aligned}
&\dot{u}(t) + A(t)u(t) = f(t) &&\text{in $V^*$, $ t \in (0,T)$}, \\
&u(0) = u_0 &&\text{in $H$},
\end{aligned}
\end{equation}
where $\dot{u}(t)$ denotes the derivative of $u$ with respect to $t$, i.e.,
$\dot{u}(t):=\frac{\dd u}{\dd t}$. Define the Lebesgue-Bochner spaces
\begin{align*}
\cY &= L^2((0,T);V),\\
\cX &= L^2((0,T);V) \cap H^1((0,T);V^*),
\end{align*}
normed by
\begin{align*}
\norm{\cY}{y}^2 &= \norm{L^2((0,T);V)}{y}^2 = \int_0^{T}\norm{V}{y(t)}^2\,\dd
t,\\
\norm{\cX}{x}^2 &= \norm{L^2((0,T);V)}{x}^2 + \norm{L^2((0,T);V^*)}{\dot{x}}^2 
+ \norm{H}{x(0)}^2 + \norm{H}{x(T)}^2.
\end{align*}
The trace theorem for Bochner-Lebesgue spaces 
  (\cite[Theorem 1, Chapter 
  XVIII.1]{DauLio}), says that $\cX$ is densely embedded in
  $\sC([0,T];H)$, so that $x(0),x(T)\in H$ are defined. Due to the inclusion of 
the boundary terms in the 
  norm, the embedding constant does not depend on time and is uniform in the 
  choice of $V$.  In fact, whenever $x,y \in \cX$, integration by parts 
is possible: 
\begin{align}  \label{intbyparts}
\int_0^t{\Big(\dual{\dot{x}}{y} + \tdual{x}{\dot{y}}\Big)}\,\dd s =
\inner{x(t)}{y(t)} - \inner{x(0)}{y(0)}.
\end{align}
Hence, for arbitrary 
  $x\in\cX$ and $t \in [0,T]$,
\begin{align*}
\norm{H}{x(t)}^2 = \norm{H}{x(0)}^2 + 2 \int_0^t{ \tdual{x(s)}{\dot{x}(s)} 
}\,\dd s.
\end{align*}
This implies  
\begin{align*}
\sup_{t\in[0,T]}\norm{H}{x(t)}^2 \leq \norm{H}{x(0)}^2 
+\norm{L^2((0,T);V)}{x}^2 
+ \norm{L^2((0,T);V^*)}{\dot{x}}^2 \leq \norm{\cX}{x}^2,
\end{align*}
which leads to the following estimate for the embedding constant:
\begin{equation}\label{eq:embedding_constant}
\sup_{0 \neq x \in \cX} \frac{\norm{\sC([0,T];H)}{x}}{\norm{\cX}{x}} \leq 1.
\end{equation}
The reader can refer to \cite[Chapter XVIII]{DauLio} for a comprehensive 
presentation of these
spaces.

A possible approach to solving the differential problem
\eqref{equationstrongform} is presented for example in \cite{SchwabSte} and it
consists in integrating in time the dual pairing between the equation and a test
function $y_1 \in \cY$ and taking the inner product between the initial
condition and another test vector $y_2 \in H$, thus obtaining the following two
equations:
\begin{equation*}
\begin{aligned}
&\int_0^T{ \Big( \dual{ \dot{u} (t) }{ y_1(t) } + a(t; u(t),y_1(t) ) \Big)}\,\dd
t = \int_0^T {\dual{ f(t) }{ y_1(t) } }\,\dd t, \\
&\inner{u(0)}{y_2} = \inner{u_0}{y_2}.
\end{aligned}
\end{equation*}
Adding the equations and defining $\cY_H:=\cY \times H$, Hilbert space normed by
its product
norm, gives the variational problem
\begin{equation}\label{primalformulation}
u \in \cX :  \sB(u,y) = \sF(y), \quad \forall y = (y_1,y_2) \in \cY_H,
\end{equation}
where the following bilinear and linear forms are used
\begin{align*}
&\sB \colon \cX \times \cY_H \rightarrow \bR,\\
&\sB(x,y) := \int_0^T{ \Big( \dual{\dot{x}(t)}{y_1(t)} +a(t;x(t),y_1(t))
\Big)}\,\dd t + \inner{x(0)}{y_2},\\
&\sF \colon \cY_H \rightarrow \bR,\\
&\sF(y) := \int_0^T {\dual{f(t)}{y_1(t)}}\,\dd t + \inner{u_0}{y_2}.
\end{align*}
We call this the {\it first} space-time variational formulation of
\eqref{equationstrongform}.
% This is not the only way to include the initial condition
% in the variational formulation, but it will turn out to be the most suitable 
in
% our analysis.

Consider now the backward adjoint problem to \eqref{equationstrongform}:
\begin{equation} \label{adjequationstrongform}
\begin{aligned}
&-\dot{v}(t) + A^*(t)v(t) = g(t) &&\mbox{in $V^*$, $t \in (0,T)$}, \\
&v(T) = \xi &&\mbox{in $H$},
\end{aligned}
\end{equation}
whose first space-time variational formulation is given by
\begin{equation}\label{firstbackward}
v \in \cX : \sB^*(y,v) = \sG(y), \quad \forall y \in \cY_H.
\end{equation}
Here the bilinear form is given by
\begin{equation*}
\begin{aligned}
&\sB^* \colon \cY_H \times \cX \rightarrow \bR,\\
&\sB^*(y,x) := \int_0^T{\Big( \tdual{y_1(t)}{-\dot{x}(t)} +a(t;y_1(t),x(t))
\Big)}\,\dd t + \inner{y_2}{x(T)},
\end{aligned}
\end{equation*}
and the load functional by
\begin{equation*}
\begin{aligned}
&\sG \colon \cY_H \rightarrow \bR,\\
&\sG(y) := \int_0^T {\duals{y_1(t)}{g(t)}{V}{V^*} }\,\dd t + \inner{y_2}{\xi}.
\end{aligned}
\end{equation*}

Note that $\cX \subseteq \cY_H$ via the embedding $y_1(t)=x(t)$,
$y_2=x(0)$.  By considering the restriction of the load functional
$\sF$ to $\cX \subseteq \cY_H$,
\begin{align*} 
&\sF \colon \cX \rightarrow \bR,\\
&\sF(x) := \int_0^T {\dual{f(t)}{x(t)}}\,\dd t 
+
\inner{u_0}{x(0)},
\end{align*}
and by interchanging the roles of trial and test spaces, the {\it
  second} (or {\it weak}) space-time formulation of the original
problem \eqref{equationstrongform} is obtained:
\begin{equation} \label{secondspacetime}
u=(u_1,u_2) \in \cY_H: \sB^*(u,x) = \sF(x), \quad \forall x \in \cX.
\end{equation}

The first and the second formulations are related and the
well-posedness of the former is equivalent to the well-posedness of
the latter.  More precisely, it holds that (by a suitable modification
of the proofs in \cite{SchwabSte,Fra})
\begin{equation}\label{infsupconstants}
\begin{aligned}
&C_B := \sup_{0 \neq x\in\cX} \sup_{0\neq y\in\cY_H}{
\frac{\sB^*(y,x)}{\norm{\cX}{x}\norm{\cY_H}{y}}} \leq
\sqrt{2\max\{1,\AM^2\}}, \\
&c_B := \inf_{0\neq x\in\cX} \sup_{0\neq y\in\cY_H}{
\frac{\sB^*(y,x)}{\norm{\cX}{x}\norm{\cY_H}{y}}} \geq 
\frac{\min\{\Am,\AM^{-1},\Am\,\AM^{-1}\}}{{2}},
\end{aligned}
\end{equation}
and, for any $ y \in \cY_H$,  
\begin{equation*}
\sup_{0\neq x \in \cX} \sB^*(y,x) 
\geq \min{\{ 1, \Am\}}\norm{\cY_H}{y}^2. 
\end{equation*}
This shows that the operator ${B^*\in\cL(\cX,\cY_{H}^*)} $, associated
with the bilinear form $ \sB^*(\cdot,\cdot)$ via $ \sB^*(y,x) =
\duals{y}{B^*x}{\cY_H}{\cY_H^*}$ is boundedly invertible. This, in
turn, implies that the operator $B \in \cL(\cY_{H},\cX^*)$ associated
with $ \sB^*(\cdot,\cdot)$ via $ \sB^*(y,x) =
\duals{By}{x}{\cX^*}{\cX}$ is also boundedly invertible, with the same
inf-sup constant, see \eqref{interchangespaces}. Moreover, for $ f \in
L^2((0,T);V^*)$ and $ u_0 \in H$, we have $\sF \in \cX^*$. Hence,
\eqref{secondspacetime} is well-posed.

If a solution of \eqref{secondspacetime} has the additional regularity
$u_1 \in \cX$, then an integration by parts \eqref{intbyparts} shows
that $u_1$ is a solution of the first problem
\eqref{primalformulation} and that $u_2 = u_1(T)$.  This is the case
when $ f \in L^2((0,T);V^*)$, as is easily seen.  In this case the
second component of the solution, $u_2$, is a continuous $H$-valued
version of $u_1$, evaluated at time $t=T$. Therefore, $u_2$ is
redundant and in other works, e.g., \cite{CheginiStev} and
  \cite{SchwabSuli}, the weak space-time formulation is
\begin{equation*} 
u \in \cY: \sB^*(u,x) = \sF(x), \quad \forall x \in
\cX_{0,\{T\}}:=\{x\in\cX:x(T)=0\}.
\end{equation*}
Of course, more general functionals $\cF$ may be considered for which
$u_1\not\in\cX$, e.g.,
$\cF(x)=\int_0^T\dual{\dot{x}(t)}{g(t)}\,\dd t$.  As another example,
in the next section we add a noise term to $\cF$.  Then we find it
useful to keep $u_2$.

\section{A weak space-time formulation of the stochastic
problem}\label{varformstoch}
\subsection{Existence and uniqueness} \label{subseq:exist}
In order to introduce the weak space-time formulation for the equation
\eqref{stocHeat} we will follow the idea outlined in Subsection
\ref{preliminary2}.  We consider spaces $\cX$ and $\cY$ restricted
to a time interval $[0,t]$, for fixed but arbitrary $t \in [0,T]$, endowed with
their respective natural norms.  We denote these spaces 
\begin{equation*}
\cY_0^t = L^2((0,t);V), \quad
\cX_0^t = L^2((0,t);V) \cap H^1((0,t);V^*),
\end{equation*}
normed by
\begin{align*}
\norm{\cY_0^t}{y}^2 &= \norm{L^2((0,t);V)}{y}^2 , \\
\norm{\cX_0^t}{x}^2 &= \norm{L^2((0,t);V)}{x}^2 + 
\norm{L^2((0,t);V^*)}{\dot{x}}^2 
+ \norm{H}{x(0)}^2 + \norm{H}{x(t)}^2, 
\end{align*}
with the convention that $\cX = \cX_0^T$ and $\cY = \cY_0^T$.
The reason for introducing the parameter $t\in[0,T]$ is
  that we want to display the time dependence of $u_2$, so that we can
  take the supremum with respect to $t$ and obtain norms and spaces
  consistent with the ones used in \cite{PrevotRockner}.

We assume that the family of operators $A(\omega,s)$ is as in
Section~\ref{intro}, i.e., that its bilinear forms satisfy the
following conditions for some positive numbers $\Am, \AM$:
\begin{equation*}
\begin{aligned}
& \abs{a(\omega,s;u,v)} \leq \AM \norm{V}{u} \norm{V}{v},\quad
&& (\omega,s) \in \Omega \times [0,T], && \, u,v\in V, \\
& a(\omega,s;v,v) \geq \Am \norm{V}{v}^2, \quad &&(\omega,s) \in \Omega \times 
[0,T], &&\, v\in V.
\end{aligned}
\end{equation*}

We introduce a family of problems parametrized by $(\omega,t)$, defined
by the bilinear forms
\begin{equation*}
\begin{aligned}
&\sB_{\omega,t}^* \colon (\cY_0^t \times H) \times \cX_0^t \rightarrow \bR,\\
&\sB_{\omega,t}^*(y,x) := \int_0^t
\Big(\dual{y_1(s)}{-\dot{x}(s)}
+a(\omega,s;y_1(s),x(s)) \Big)\,\dd s 
+
\inner{y_2}{x(t)},
\end{aligned}
\end{equation*}
and the load functionals 
\begin{align*}
\sF_{\omega,t} \colon \cX_0^t  \rightarrow \bR,\quad 
\sW_{\omega,t} \colon \cX_0^t  \rightarrow \bR,
\end{align*}
where
\begin{align*}
&\sF_{\omega,t}(x) = \int_0^t {\dual{f(\omega,s)}{x(s)}}\,\dd s +
\inner{U_0(\omega)}{x(0)},\\
&\sW_{\omega,t}(x) = \Big(\int_0^t{\inner{\Psi(s)\,\dd 
W(s)}{x(s)}}\Big)(\omega) .
\end{align*}
The weak space-time formulation reads, for almost every $(\omega,t)
\in \Omega \times [0,T]$:
\begin{equation} \label{secondspacetimestoch}
U_{\omega,t} \in \cY_0^t \times H : \sB_{\omega,t}^*(U_{\omega,t},x) 
= \sF_{\omega,t}(x) + \sW_{\omega,t}(x), 
\quad \forall x \in \cX_0^t.
\end{equation}

Since our assumption on $a(\omega,s;\cdot,\cdot)$ is uniform with
respect to $\omega,s$ with constants $\Am,\AM$, we conclude that
the bilinear forms $\sB_{\omega,t}^*$ satisfy the inf-sup conditions
uniformly in $\omega,t$ with the same constants $C_B, c_B$ as in
\eqref{infsupconstants}.  This means that, for almost every $(\omega,t)
\in \Omega \times 
[0,T]$, the operator $B_{\omega,t} \in \cL(\cY_0^t
\times {H},(\cX_0^t)^*)$ associated to $\sB_{\omega,t}^*(\cdot,\cdot)$
via $ \sB_{\omega,t}^*(y,x) =
\duals{B_{\omega,t}y}{x}{(\cX_0^t)^*}{\cX_0^t}$ is boundedly
invertible.  Moreover, the norm of its inverse 
$B_{\omega,t}^{-1}$ is bounded by $c_B^{-1}$, uniformly in $\omega,t$.

Focusing now on the right-hand side, we assume that
$f(\omega,\cdot) \in L^2((0,T);V^*)$ and that $U_0(\omega) \in
H$. Then, for $ x \in \cX_0^t$, it holds that
\begin{align*}
\abs{\sF_{\omega,t}(x)} &= \Big| {\int_0^t{\dual{f(\omega,s)}{x(s)}\,\dd s +
\inner{U_0(\omega)}{x(0)}}} \Big|\\
&\leq {\Big(\int_0^t{\norm{V^*}{f(\omega,s)}^2}\,\dd
s\Big)^{\frac{1}{2}}\Big(\int_0^t{\norm{V}{x(s)}}^2\,\dd
s\Big)^{\frac{1}{2}}} +
\norm{H}{U_0(\omega)}\norm{H}{x(0)}\\
&\lesssim \big(\norm{L^2((0,t);V^*)}{f(\omega,\cdot)} +
\norm{H}{U_0(\omega)}\big) 
\norm{\cX_0^t}{x},
\end{align*}
showing that $\sF_{\omega,t} \in (\cX_0^t)^*$ with 
\begin{align}
  \label{eq:boundF}
\norm{(\cX_0^t)^*}{\sF_{\omega,t} } \lesssim 
\norm{L^2((0,T);V^*)}{f(\omega,\cdot)} + \norm{H}{U_0(\omega)} 
\end{align}
for almost every
$(\omega,t) \in \Omega \times 
[0,T]$.  Hence, by monotonicity
in $t$, it follows that
\begin{equation}\label{boundrhsmeansq}
\bE\Big[ \sup_{t\in[0,T]}\norm{(\cX_0^t)^*}{\sF_{\cdot,t} } \Big] \lesssim 
\bE\Big[ \norm{L^2((0,T);V^*)}{f} + \norm{H}{U_0}\Big]. 
\end{equation}

The next step is provided by the following lemma, which shows that
${\sW}_{\omega,t} \in (\cX_0^t)^*$ with an estimate similar to the one
in \eqref{boundrhsmeansq}.  In order to prove this, we let
$A_0 \in\cL(V, V^*)$ be the operator associated with the bilinear form
$a_0(\cdot,\cdot)=\innerv{\cdot}{\cdot}$.  Then $A_0$ does not depend
on $(\omega,t)$ and satisfies the boundedness and coercivity
\eqref{eq:bounds_on_A} with constants $\Am=\Am=1$.  Then $-A_0$ is
self-adjoint and the generator of an analytic semigroup
$(S_0(t))_{t\geq 0} $, which is also self-adjoint,
$(S_0(t))^*=S_0(t)$.  Due to the compact embedding $V\subset H$ and
the spectral theorem there is an orthonormal eigenbasis for $A_0$ in
$H$.  We denote the eigenpairs by $(\lambda_j,\phi_j)$,
$j=1,\dots,\infty$.

In the generic example, where $V=H^1_0(D)\subset H=L^2(D)$ with
$\innerv{u}{v}=\inner{\nabla u}{\nabla v}$ and elliptic
operator of the form $A(\omega,s)u=-\nabla
\cdot(a(\omega,s)\nabla u)+b(\omega,s)\cdot\nabla u+c(\omega,s)u$, we
would have $A_0=-\Delta$, the Dirichlet Laplacian.

\begin{lemma}\label{lemmastochint}
If  $\Psi{Q}^{\frac{1}{2}} \in L^2(\Omega \times (0,T) ;\sL_2(H))$, then there
exists a process $K\in L^2(\Omega;\sC([0,T];\bR))$
such that, for almost every $(\omega,t) \in \Omega \times 
[0,T]$, 
\begin{equation}\label{boundstochintomegawise1}
\norm{(\cX_0^t)^*}{\sW_{\omega,t}} \lesssim K(\omega,t)
\end{equation}
and 
\begin{equation}\label{boundstochintmeansq1}
\bE\Big[ \sup_{t\in[0,T]} K(\cdot,t)^2\Big]
\lesssim 
\bE\Big[ \int_0^T{\norm{\sL_2(H)}{\Psi(t)Q^{\frac12}}^2}\,\dd t\Big].  
\end{equation}
Hence, 
${\sW}_{\omega,t}
\in (\cX_0^t)^*$ for almost every $(\omega,t) \in \Omega \times 
[0,T]$
and 
\begin{align}\label{boundstochintmeansq2}
\bE\Big[ \sup_{t\in[0,T]} \norm{(\cX_0^t)^*}{{\sW}_{\cdot,t}}^2 
\Big]\lesssim \bE\Big[
\int_0^T{\norm{\sL_2(H)}{\Psi(t)Q^{\frac12}}^2}\,\dd t\Big]. 
\end{align}
The constant hidden in $\lesssim$ depends only on numerical
factors.
\end{lemma}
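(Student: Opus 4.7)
The plan is to regularise the raw martingale $Z(t):=\int_0^t\Psi(s)\,\dd W(s)$, which a priori lives only in $H$, by comparing it with the stochastic convolution against the reference semigroup $S_0$. Setting $\hat Z(t):=\int_0^t S_0(t-s)\Psi(s)\,\dd W(s)$ and noting that, because $A_0$ fulfils \eqref{eq:bounds_on_A} with constants $\Am=\AM=1$, $\hat Z$ is the unique variational solution of $\dd\hat Z+A_0\hat Z\,\dd t=\Psi\,\dd W$ with $\hat Z(0)=0$, the classical energy argument (Itô's formula applied to $\|\hat Z(t)\|_H^2$ followed by Burkholder--Davis--Gundy) yields
\[
\bE\Big[\sup_{t\in[0,T]}\|\hat Z(t)\|_H^2+\|\hat Z\|_{L^2((0,T);V)}^2\Big]\lesssim\bE\Big[\int_0^T\norm{\sL_2(H)}{\Psi(s)Q^{\frac12}}^2\,\dd s\Big].
\]

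From the SPDE one reads $\int_0^t\Psi\,\dd W=\hat Z(t)+\int_0^t A_0\hat Z(s)\,\dd s$. Applying the Itô product rule to the real-valued process $s\mapsto\inner{\hat Z(s)}{x(s)}$ for deterministic $x\in\cX_0^t$ (no cross variation appears), using $\hat Z(0)=0$, $\dual{A_0\hat Z}{x}=a_0(\hat Z,x)$ and the Gelfand extension $\inner{\hat Z(s)}{\dot x(s)}=\dual{\dot x(s)}{\hat Z(s)}$ (meaningful because $\hat Z(s)\in V$ for a.e.\ $s$), yields the pathwise identity
\[
\sW_{\omega,t}(x)=\inner{\hat Z(t,\omega)}{x(t)}+\int_0^t a_0(\hat Z(s,\omega),x(s))\,\dd s-\int_0^t\dual{\dot x(s)}{\hat Z(s,\omega)}\,\dd s,
\]
valid for a.e.\ $\omega$. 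The identity is first established for smooth $x\in\sC^1([0,t];V)$, where all three pairings are classical, and then extended by density in $\cX_0^t$; the right-hand side serves as the canonical bounded extension of $\sW_{\omega,t}$ to the whole of $\cX_0^t$.

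Estimating the three right-hand side terms by Cauchy--Schwarz, respectively in $H$, in $L^2((0,t);V)$ (using $|a_0(u,v)|\leq\|u\|_V\|v\|_V$), and in the $L^2((0,t);V^*)\times L^2((0,t);V)$ duality, and controlling the individual $\cX_0^t$-components of $x$ by the full $\cX_0^t$-norm, gives \eqref{boundstochintomegawise1} with
\[
K(\omega,t):=\|\hat Z(t,\omega)\|_H+\sqrt{2}\,\|\hat Z(\omega)\|_{L^2((0,t);V)}.
\]
Path continuity $K(\omega,\cdot)\in\sC([0,T];\bR)$ follows from $\hat Z(\omega)\in\sC([0,T];H)$ and absolute continuity of $t\mapsto\int_0^t\|\hat Z(s,\omega)\|_V^2\,\dd s$, while \eqref{boundstochintmeansq1} is precisely the energy estimate of the first step, and \eqref{boundstochintmeansq2} follows by substituting \eqref{boundstochintomegawise1} into the supremum and taking expectation. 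The main obstacle is the regularity mismatch between $Z(t)\in H$ and $\dot x(s)\in V^*$: a direct integration by parts on $Z$ produces an ill-defined pairing, and the parabolic smoothing by $S_0$ is exactly what lifts the noise to a process with the $L^2((0,T);V)$-regularity needed to test against $\dot x$.
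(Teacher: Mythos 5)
Your proof is correct, and it lands on exactly the same representation of $\sW_{\omega,t}$ as the paper — namely, testing the stochastic convolution $\hat Z(r)=\int_0^r S_0(r-s)\Psi(s)\,\dd W(s)$ against $x(t)$, $A_0x$, and $\dot x$, with the same $K$ up to equivalent norms — but you reach it and estimate it by a genuinely different route. The paper writes $x=(B_0^*)^{-1}B_0^*x$ using the mild solution formula for the backward adjoint problem with generator $A_0$, substitutes this into $\int_0^t\inner{\Psi(s)\,\dd W(s)}{x(s)}$, and applies the stochastic Fubini theorem together with $(S_0(t))^*=S_0(t)$; you instead apply the Itô product rule to $s\mapsto\inner{\hat Z(s)}{x(s)}$, viewing $\hat Z$ as the variational solution of $\dd\hat Z+A_0\hat Z\,\dd t=\Psi\,\dd W$, and extend from $\sC^1([0,t];V)$ test functions by density. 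For the moment bounds the paper argues by eigenbasis expansion and Parseval for the $L^2((0,T);V)$-term (with explicit constant $\tfrac12$) and by the factorization $v=z-A_0\int_0^tS_0(t-s)z(s)\,\dd s$ plus Doob's inequality for the supremum (constant $16$), whereas you invoke the standard variational energy estimate (Itô's formula for $\|\hat Z\|_H^2$ plus Burkholder--Davis--Gundy). Your approach is arguably more robust — it does not use self-adjointness of $A_0$ or the spectral decomposition, only coercivity, and it is the same Itô-formula device the paper itself uses later in Lemma~\ref{lemma:versions} — at the price of quoting the variational existence/Itô machinery of Pr\'ev\^ot--R\"ockner rather than the elementary Parseval computations; the paper's route yields sharper explicit constants. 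The only points you should make explicit are (i) that $\hat Z$ is indeed the variational solution in $L^2(\Omega\times(0,T);V)\cap L^2(\Omega;\sC([0,T];H))$ so that the Itô formula applies, and (ii) the justification that the cross-variation vanishes because $x$ is deterministic and absolutely continuous — both standard, neither a gap.
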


\begin{proof}
  We consider the adjoint problem \eqref{adjequationstrongform} on
  $[0,t]$, with $A^*(\cdot)$ replaced by $A^*_{0}=A_0$.  Problem
  \eqref{adjequationstrongform} is well-posed, i.e., the operator
  ${B_0^*\colon \cX_0^t \rightarrow (\cY_0^t \times H)^*}$ associated
  with the bilinear form in \eqref{firstbackward} is a bijection.
  From the theory of operator semigroups we recall that the solution
  operator $(B_0^{*})^{-1}$ can be represented by the mild solution
  formula,
\begin{equation*}
v(s)=\big( (B_0^{*})^{-1} (g,\xi)  \big)(s)
= \int_s^t{ S_0(r-s)g(r)}\,\dd r 
+ S_0(t-s)\xi, \quad s\in[0,t]. 
\end{equation*}
In order to see this we must show that the mild formula belongs to
$\cX_0^t$, whenever the data $(g,\xi)$ belong to $ (\cY_0^t \times
H)^*=L^2((0,t);V^*)\times H$; more precisely,  
\begin{align*}
  \norm{\cX_0^t}{v}\lesssim\norm{L^2((0,t);V^*)}{g} + \norm{H}{\xi}.  
\end{align*}
This is proved by means of an eigenbasis expansion of the operator
$S_0(t)$ and Parseval's identity.  For example, one term is
\begin{align*}
  &\Norm{L^2((0,t);V)}{\int_{\cdot}^t{ S_0(r-\cdot)g(r)}\,\dd r }^2
=
\int_0^t \Norm{V}{\int_s^t{ S_0(r-s)g(r)}\,\dd r }^2\,\dd s 
\\ & \quad
\lesssim
\int_0^t \Norm{H}{A_0^{\frac12}\int_s^t{ S_0(r-s)g(r)}\,\dd r }^2\,\dd s 
\\ &  \quad
=
\int_0^t \sum_{j=1}^{\infty} 
\Big(
\int_s^t 
\lambda_j^{\frac12}\ee^{-\lambda_j(r-s)}\inner{g(r)}{\phi_j}
\,\dd r 
\Big)^2 
\,\dd s 
\\ & \quad
\le 
\int_0^t \sum_{j=1}^{\infty} 
\Big(
\int_s^t 
\big(\lambda_j^{\frac12}\ee^{-\frac12\lambda_j(r-s)}\big)^2
\,\dd r 
\int_s^t 
\big(\ee^{-\frac12\lambda_j(r-s)}
\inner{g(r)}{\phi_j}\big)^2 
\,\dd r 
\Big)
\,\dd s 
\\ &  \quad
\le 
\sum_{j=1}^{\infty} 
\int_0^t 
\int_s^t 
\big(\ee^{-\frac12\lambda_j(r-s)}
\inner{g(r)}{\phi_j}\big)^2 
\,\dd r 
\,\dd s 
\\ &  \quad
=
\sum_{j=1}^{\infty} 
\int_0^t 
\int_0^r 
\ee^{-\lambda_j(r-s)}
\,\dd s \, 
\inner{g(r)}{\phi_j}^2 
\,\dd r 
\\ &  \quad
=
\sum_{j=1}^{\infty} 
\int_0^t 
\lambda_j^{-1}
\inner{g(r)}{\phi_j}^2 
\,\dd r 
\lesssim 
\norm{L^2((0,t);V^*)}{g}^2.
\end{align*}
The remaining terms in $\norm{\cX_0^t}{v}$ are treated similarly. 

We can hence write any $x\in
\cX_0^t$ as $x=(B_0^{*})^{-1}B_0^*x$, which is represented as:
\begin{equation*} 
x(s) = ((B_0^{*})^{-1}B_0^*x)(s) 
= \int_s^t{ S_0(r-s)(-\dot{x}(r)+ A_0 x(r))}\,\dd r 
+ S_0(t-s)x(t).  
\end{equation*} 
We insert this expression into the weak stochastic integral to get
\begin{align*}
&\int_0^t \inner{\Psi(s)\,\dd W(s)}{x(s)}\\
&\qquad =\int_0^t {\biginner{\Psi(s)\,\dd W(s)}{\int_s^t{
S_0(r-s)(-\dot{x}(r)+ A_0 x(r)) }\,\dd r }}\\
&\quad\qquad  + \int_0^t {\inner{\Psi(s)\,\dd
W(s)}{S_0(t-s)x(t)}}\\
&\qquad =\int_0^t{ \biginner{  \int_0^r{ S_0(r-s) \Psi(s)\,\dd W(s)}}{
(-\dot{x}(r)+ A_0 x(r)) }}\,\dd r \\
&\quad\qquad  + \biginner{ \int_0^t { S_0(t-s) \Psi(s)\,\dd
W(s)}}{x(t)}.
\end{align*}
Here we used the stochastic Fubini theorem and
$(S_0(t))^*=S_0(t)$. It follows that
\begin{align*}
&\Big|{\int_0^t \inner{\Psi(s)\,\dd W(s)}{x(s)} }\Big|\\
&\quad \leq \Big(\int_0^t{\Norm{V}{ \int_0^r{ S_0(r-s) \Psi(s)\,\dd W(s)}}^2} 
\,\dd r\Big)^{\frac12} 
\Big( \int_0^t{\norm{V^*}{-\dot{x}(r)+ A_0 x(r) }^2} 
\,\dd r \Big)^{\frac12} \\
&\qquad  + \Norm{H}{ \int_0^t { S_0(t-s) \Psi(s)\,\dd W(s)}} 
\norm{H}{x(t)}\\
&\quad 
\lesssim 
\Big( \int_0^t{\Norm{V}{
\int_0^r{
S_0(r-s) \Psi(s)\,\dd W(s)}}^2} 
\,\dd r
\\ &\qquad 
+ \Norm{H}{ \int_0^t { S_0(t-s) \Psi(s)\,\dd W(s)}}^2\Big)^{\frac12} 
\norm{\cX_0^t}{x},
\end{align*}
where the constant hidden in $\lesssim$ depends only on numerical 
factors.

This implies \eqref{boundstochintomegawise1} with 
\begin{equation*}
K(\cdot,t) 
:= \Big( \int_0^t{\Norm{V}{
\int_0^r{
S_0(r-s) \Psi(s)\,\dd W(s)}}^2} 
\,\dd r
+ \Norm{H}{ \int_0^t { S_0(t-s) \Psi(s)\,\dd W(s)}}^2\Big)^{\frac12} .
\end{equation*}
By monotonicity in $t$ and by taking the expectation, we obtain
\begin{equation*}
\begin{aligned}
\bE\Big[  \sup_{t\in[0,T]} K(\cdot,t)^2 
\Big]
&\lesssim
\bE\Big[ 
\int_0^T{\Norm{V}{ \int_0^r{ S_0(r-s) \Psi(s)\,\dd W(s)}}^2} \,\dd r \Big]
\\ &\quad +
\bE\Big[ 
 \sup_{t\in[0,T]}\Norm{H}{ \int_0^t { S_0(t-s) \Psi(s)\,\dd W(s)}}^2 \Big].
\end{aligned}
\end{equation*}
The proof of \eqref{boundstochintmeansq1} is now completed by the 
inequalities
\begin{align*}
\bE\Big[  \int_0^T{\Norm{V}{ \int_0^r{ S_0(r-s) \Psi(s)\,\dd W(s)}}^2} \,\dd r  
\Big] 
\leq \frac12 \bE\Big[\int_0^T { \norm{\sL_2(H)}{\Psi(s)Q^{\frac12}}^2 }\,\dd 
s\Big] 
\end{align*}
and 
\begin{align*}
\bE\Big[ \sup_{t\in[0,T]}  
\Norm{H}{ \int_0^t { S_0(t-s) \Psi(s)\,\dd W(s)}}^2 \Big] 
\leq 16 \bE\Big[ \int_0^T {
\norm{\sL_2(H)}{\Psi(s)Q^{\frac12}}^2 }\,\dd
s\Big] . 
\end{align*}
These are proved in \cite[Chapt.~3, Lemma 5.2]{Chow}. 
We sketch the proof of the second inequality; the first one
is proved by an eigenbasis expansion and Parseval's identity and can
be found in the cited reference. We 
introduce the notation
\begin{align*}
v(t) := \int_0^t{S_0(t-s)\Psi(s)\,\dd W(s)},  \quad 
z(t) := \int_0^t{\Psi(s)\,\dd W(s)}
\end{align*}
and integrate by parts, using $\frac{\partial}{\partial
  s}S_0(t-s)=A_0S_0(t-s)$ and  $\dd z=\Psi\,\dd W$, to get 
\begin{align*} 
v(t) = z(t) - A_0\int_0^t{S_0(t-s)z(s)}\,\dd s.
\end{align*}
By means of an eigenbasis expansion and Parseval's identity, we have 
\begin{align*}
\Norm{H}{ A_0\int_0^t{S_0(t-s)z(s)}\,\dd s}^2 \leq \sup_{s \in 
[0,t]}\norm{H}{z(s)}^2.
\end{align*}
Since $z$ is a martingale we can apply Doob's inequality followed by Ito's 
isometry:
\begin{align*}
\bE\Big[ \sup_{t\in[0,T]} \norm{H}{v(t)}^2 \Big] 
&\leq 4 \bE\Big[ \sup_{t\in[0,T]} 
\norm{H}{z(t)}^2 \Big] \leq 16 \bE\Big[ \norm{H}{z(T)}^2 \Big] 
\\
&=  16 \bE\Big[ 
\int_0^T {\norm{\sL_2(H)}{\Psi(s)Q^{\frac12}}^2 }\,\dd s\Big] . 
\end{align*}
This completes the proof.
\end{proof}

\begin{remark}
  The above proof relies heavily on the use of an eigenbasis
  expansion.  See \cite[Proposition 7.3]{DapZab} for a slightly weaker
  result for a more general semigroup.
\end{remark}

%%%%%%%%%%%%%%%%%%%%%%%%%%%%%%%%%%%%%%%%%%%%%%%%%%%%%%%%%%%%%%%%%%%%%%%%%%%%%%%
%%%%%%%%%%%%%%%%%%%%%%%%%%%%%%%%%%%%%%%%%%%%%%%%%%%%%%%%%%%%%%%%%%%%%%%%%%%%%%%
%%%%%%%%%%%%%%%%%%%%%%%%%%%%%%%%%%%%%%%%%%%%%%%%%%%%%%%%%%%%%%%%%%%%%%%%%%%%%%%
%%
%%	 VERSIONS AND ROCKNER SOLUTION
%%
%%%%%%%%%%%%%%%%%%%%%%%%%%%%%%%%%%%%%%%%%%%%%%%%%%%%%%%%%%%%%%%%%%%%%%%%%%%%%%%
%%%%%%%%%%%%%%%%%%%%%%%%%%%%%%%%%%%%%%%%%%%%%%%%%%%%%%%%%%%%%%%%%%%%%%%%%%%%%%%
%%%%%%%%%%%%%%%%%%%%%%%%%%%%%%%%%%%%%%%%%%%%%%%%%%%%%%%%%%%%%%%%%%%%%%%%%%%%%%%

By means of the results presented above we have a unique solution
$ U_{\omega,t} = \big( U_1(\cdot),U_2 \big)_{\omega,t}$
of~\eqref{secondspacetimestoch} for every $t\in[0,T]$.  By uniqueness,
we have that $(U_1(s))_{\omega,t}=\big(U_1(s)\big)_{\omega,T}$ for
almost every $s\in[0,t]$.  This justifies the notation
\begin{align} \label{eq:form}
U(\omega,t) = \big( U_1(\omega,t) ,U_2(\omega,t) \big), 
\end{align}
where $U_1(\omega,t)=\big(U_1(\omega,t)\big)_{\omega,T}$ and
$U_2(\omega,t)=\big(U_2\big)_{\omega,t}$.  Since now $U_1 \notin \cX$,
we cannot conclude that $U_1$ is continuous and
$U_1 = U_2$. However, the following lemma ensures that $U_2$ is a
continuous version of $U_1$ and that, in particular, our concept of solution is
consistent with the variational solution of the same equation.

\begin{lemma}\label{lemma:versions}  Let $U=(U_1,U_2)$ of the
  form~\eqref{eq:form} be the unique solution
  of~\eqref{secondspacetimestoch}.  If we denote by $\cU$ the
  variational solution to \eqref{stocHeat} as in
  Definition~\ref{varsol}, then the following identities hold:
\begin{itemize}
\item[] $ U_1 = \bar{\cU}$ in $L^2((0,T);V)$, $\bP$-a.s.
\item[] $ U_2 = \cU$ in $H$, $\bP \otimes \dd t$-a.s.
\item[] $ U_1 = U_2 $ in $L^2((0,T);H)$, $\bP$-a.s.
\end{itemize}
Moreover,  
\begin{align*}
U_2(t) = U_0 + \int_0^t{\big(- A(s) U_1(s)+f(s)\big) }\,\dd s 
+  \int_0^t{\Psi(s)\, \dd W(s)}, 
\quad \text{$\bP$-a.s.} 
\end{align*}
In particular, it follows that $U_2$ is an $H$-valued continuous version of 
$U_1$.
\end{lemma}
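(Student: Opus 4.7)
The strategy is to reduce the weak space-time formulation to the integral identity defining a variational solution, and then invoke the uniqueness of variational solutions from \cite{PrevotRockner}.

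First, I would test \eqref{secondspacetimestoch} with the constant-in-time function $x(s)\equiv v$ for arbitrary $v \in V$. Such an $x$ lies in $\cX_0^t$, with $\dot x\equiv 0$ and $x(0)=x(t)=v$. Substituting, using $a(\omega,s;U_1(s),v)=\dual{A(s)U_1(s)}{v}$, and commuting $v$ past the $H$-valued stochastic integral via $\int_0^t\inner{\Psi(s)\,\dd W(s)}{v}=\inner{\int_0^t\Psi(s)\,\dd W(s)}{v}$, yields
\begin{equation*}
\int_0^t\dual{A(s)U_1(s)}{v}\,\dd s+\inner{U_2}{v}=\int_0^t\dual{f(s)}{v}\,\dd s+\inner{U_0}{v}+\biginner{\int_0^t\Psi(s)\,\dd W(s)}{v}.
\end{equation*}
Since this holds for every $v\in V$, it is equivalent to the desired identity
\begin{equation*}
U_2(t)=U_0+\int_0^t\big(-A(s)U_1(s)+f(s)\big)\,\dd s+\int_0^t\Psi(s)\,\dd W(s)
\end{equation*}
in $V^*$, $\bP$-a.s.\ for each $t\in[0,T]$. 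Since the right-hand side is $\bP$-a.s.\ continuous in $t$ (the stochastic integral admitting a continuous $H$-valued modification), $U_2(\cdot)$ admits an $H$-valued continuous version, which I would henceforth identify with $U_2$ itself.

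Having established the integral identity, $(U_1,U_2)$ satisfies the defining relation of a variational solution in Definition~\ref{varsol}, with $U_1$ playing the role of the progressively measurable $V$-valued version $\bar{\cU}$. Invoking the existence and uniqueness of variational solutions for our linear problem (\cite[Chapt.~4]{PrevotRockner}), one concludes $U_2=\cU$ in $H$, $\bP\otimes\dd t$-a.s., and $U_1=\bar{\cU}$ in $L^2((0,T);V)$, $\bP$-a.s. The equality $U_1=U_2$ in $L^2((0,T);H)$, $\bP$-a.s.\ then follows because both represent the same $\bP\otimes\dd t$-equivalence class in $H$, with $U_2$ being the continuous $H$-valued version.

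The main obstacle is verifying that $U_1$ qualifies as a $V$-valued progressively measurable version of the second component, so that the identification with $\bar{\cU}$ via the uniqueness theorem is legitimate. This requires tracking the measurability of the $(\omega,t)$-parametrized BNB solution operator, which should follow from the uniform boundedness of its inverse together with the measurability of the data $f$, $U_0$, and $\Psi$. A secondary, but routine, point is justifying the commutation of $v$ with the stochastic integral, which is standard for $H$-valued Itô integrals against a fixed element of $H$.
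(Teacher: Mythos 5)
Your overall direction is the reverse of the paper's: you start from the weak space-time solution $(U_1,U_2)$, derive the integral identity by testing with constants, and then try to invoke the uniqueness of variational solutions. The paper instead starts from the variational solution $\cU$ (whose existence and uniqueness it imports from \cite{PrevotRockner}), applies It\^o's formula to $\inner{\cU(t)}{\xi}\phi(t)$ with $\phi\in H^1((0,T);\bR)$, $\xi\in V$, shows by density of such products in $\cX_0^t$ that $(\bar{\cU},\cU(t))$ solves \eqref{secondspacetimestoch}, and concludes by the already-established uniqueness of the weak space-time solution. Your first step is sound: constant $V$-valued test functions do lie in $\cX_0^t$ and yield the displayed identity for $U_2(t)$ in $V^*$.

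However, the reduction to the uniqueness of variational solutions has a genuine gap. To invoke that uniqueness you must first exhibit $(U_1,U_2)$ as a variational solution in the sense of Definition~\ref{varsol}, and this requires (i) that $U_2$ be a continuous $H$-valued adapted process and (ii) that $U_1$ be a $\bP\otimes\dd t$ version of $U_2$, i.e., $U_1=U_2$ a.e. Neither follows from the constant-test-function computation. The identity you derive only shows that $U_2(t)$ agrees a.e.\ with a process $Z(t)$ that is continuous into $V^*$, not into $H$ (the drift term $\int_0^t A(s)U_1(s)\,\dd s$ is merely $V^*$-valued, so your parenthetical appeal to the $H$-continuity of the stochastic integral does not cover the right-hand side as a whole); upgrading to $H$-continuity is precisely the content of the It\^o-formula theorem in \cite{PrevotRockner}, whose hypothesis is that $Z$ admits a $V$-valued version --- which is (ii). And (ii) is itself one of the conclusions of the lemma; it is not the measurability issue you flag in your final paragraph, but a substantive identification. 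It can be supplied, e.g., by additionally testing with $x(s)=\phi(s)\xi$, applying the product rule to $\phi(t)\duals{Z(t)}{\xi}{V^*}{V}$, and comparing with the weak formulation to get $\int_0^t \dot\phi(s)\,\duals{Z(s)-U_1(s)}{\xi}{V^*}{V}\,\dd s=0$ for all $\phi$ and $\xi$, whence $U_1=Z=U_2$ a.e. Without this extra step (or the paper's reverse argument, which avoids the issue because the variational solution comes equipped with $\bar{\cU}$ already being a version of $\cU$), the proof is incomplete.
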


\begin{proof}
For any $t\in[0,T]$ the variational solution is such that:
\begin{align*}
\cU(t) = U_0 + \int_0^t{\big(- A(s) \bar{\cU}(s)+f(s)\big) }\,\dd s 
+  \int_0^t{\Psi(s)\, \dd W(s)}, 
\quad \text{$\bP$-a.s.} 
\end{align*}
We  multiply this by arbitrary $\xi \in V$: 
\begin{align*}
\inner{\cU(t)}{\xi} 
= \inner{U_0}{\xi} + \int_0^t\inner{- A(s) \bar{\cU}(s)+f(s) }{\xi}\,\dd s  
+  
\int_0^t\inner{\Psi(s)\, \dd W(s) }{\xi}.  
\end{align*} 
By using Ito's formula (similarly to \cite[Lemma 5.5]{DaPrato}) on the
process  $\inner{\cU(t)}{\xi}\phi(t)$, where $\phi \in
H^1((0,T);\bR)$, we obtain 
\begin{align*}
\inner{\cU(t)}{\xi}\phi(t) 
&
=  \inner{U_0}{\xi}\phi(0) 
\\ & \quad 
+\int_0^t \Big(
\inner{ \bar{\cU}(s)}{\xi}\dot{\phi}(s) 
+\dual{-A(s) \bar{\cU}(s) +f(s)}{\xi}\phi(s) \Big) 
\,\dd s 
\\ &\quad 
+\int_0^t{\phi(s)\inner{\Psi(s) \,\dd W(s) }{\xi}}, 
\quad \text{$\bP$-a.s.} 
\end{align*}
This is the same as 
\begin{align*}
\inner{\cU(t)}{\phi(t)  \xi} 
&
=  \inner{U_0}{\phi(0) \xi}
+\int_0^t{ 
\tdual{ \bar{\cU}(s)}{  \dot{\phi}(s) \xi
-A^*(s) (\phi(s)\xi)}}
 \,\dd s 
\\ & \quad 
+\int_0^t{ 
\dual{f(s)}{\phi(s)\xi} }
\,\dd s 
+\int_0^t{\inner{\Psi(s)\, \dd W(s) }{\phi(s)\xi}}.   
\end{align*}
Since functions of the form $x = \phi  \xi$ are dense in
$\cX_0^t$, we conclude that, for almost all
$(\omega,t)\in\Omega \times 
[0,T]$, 
\begin{align*}
&\int_0^t
\tdual{ \bar{\cU}(s)} {-\dot{x}(s)+A^*(s) x(s)}
 \,\dd s 
+\inner{\cU(t)}{x(t)}
\\ &\quad =  \inner{U_0}{x(0)}
+\int_0^t 
\dual{f(s)} {x(s)}
\,\dd s 
+\int_0^t\inner{\Psi(s) \,\dd W(s) }{x(s)}, 
\quad \forall x\in\cX_0^t. 
\end{align*}
This means that $(\bar{\cU},\cU(t))\in \cY_0^t \times H$ is a solution
to~\eqref{secondspacetimestoch}.   The conclusions of the lemma now
follow by uniqueness of such a solution.  
\end{proof}

%%%%%%%%%%%%%%%%%%%%%%%%%%%%%%%%%%%%%%%%%%%%%%%%%%%%%%%%%%%%%%%%%%%%%%%%%%%%%%%
%%%%%%%%%%%%%%%%%%%%%%%%%%%%%%%%%%%%%%%%%%%%%%%%%%%%%%%%%%%%%%%%%%%%%%%%%%%%%%%
%%%%%%%%%%%%%%%%%%%%%%%%%%%%%%%%%%%%%%%%%%%%%%%%%%%%%%%%%%%%%%%%%%%%%%%%%%%%%%%
%%
%%	 EXISTENCE AND UNIQUENESS
%%
%%%%%%%%%%%%%%%%%%%%%%%%%%%%%%%%%%%%%%%%%%%%%%%%%%%%%%%%%%%%%%%%%%%%%%%%%%%%%%%
%%%%%%%%%%%%%%%%%%%%%%%%%%%%%%%%%%%%%%%%%%%%%%%%%%%%%%%%%%%%%%%%%%%%%%%%%%%%%%%
%%%%%%%%%%%%%%%%%%%%%%%%%%%%%%%%%%%%%%%%%%%%%%%%%%%%%%%%%%%%%%%%%%%%%%%%%%%%%%%

\begin{theorem}[Existence and uniqueness] \label{maintheorem} If
  $U_0 \in L^2(\Omega;H)$, $f \in L^2(\Omega \times (0,T) ;V^*)$ and
  $\Psi{Q}^{\frac{1}{2}} \in L^2(\Omega \times (0,T) ;\sL_2(H))$, then
  there exists a unique solution
  $ U = (U_1,U_2)\in L^2(\Omega \times (0,T) ;V) \times
  L^2(\Omega;\sC([0,T];H)) $
  of the form~\eqref{eq:form} to \eqref{secondspacetimestoch}. Its
  norm satisfies the bound
\begin{equation*}
\begin{aligned}
&\bE\Big[  \int_0^T{\norm{V}{U_1(t)}^2}\,\dd t +
\sup_{t\in[0,T]}\norm{H}{U_2(t)}^2 \Big] \\
&\qquad \qquad \lesssim c_B^{-1} \bE\Big[
\int_0^T{\norm{V^*}{f(t)}^2}\,\dd t +
\norm{H}{U_0}^2 
+
\int_0^T { \norm{\sL_2(H)}{\Psi(t)Q^{\frac12}}^2 }\,\dd t\Big]. 
\end{aligned}
\end{equation*}
where the constant hidden in $\lesssim$ only depends only on numerical factors.
\end{theorem}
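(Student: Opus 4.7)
The plan is to assemble the theorem from the pieces already established in the excerpt: the uniform $\omega$-wise inf-sup bound, the estimate~\eqref{eq:boundF} on $\sF_{\omega,t}$, the estimate from Lemma~\ref{lemmastochint} on $\sW_{\omega,t}$, and Lemma~\ref{lemma:versions} for the continuity of $U_2$. The main obstacle is not any single calculation but rather the careful bookkeeping required to convert pointwise $(\omega,t)$ bounds into the stated norm on $L^2(\Omega \times (0,T);V) \times L^2(\Omega;\sC([0,T];H))$, while making sure the two components really are the ones produced by the uniqueness argument \eqref{eq:form}.

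First I would fix $(\omega,t)$ outside a null set and observe that $\sF_{\omega,t} + \sW_{\omega,t}\in(\cX_0^t)^*$ by~\eqref{eq:boundF} and~\eqref{boundstochintomegawise1}. Since $B_{\omega,t}$ is boundedly invertible with $\|B_{\omega,t}^{-1}\| \le c_B^{-1}$ uniformly in $(\omega,t)$, there is a unique $U_{\omega,t}=(U_1,U_2)_{\omega,t}\in\cY_0^t\times H$ solving \eqref{secondspacetimestoch}, with
\begin{equation*}
\|U_{\omega,t}\|_{\cY_0^t\times H} \le c_B^{-1}\bigl(\|\sF_{\omega,t}\|_{(\cX_0^t)^*}+\|\sW_{\omega,t}\|_{(\cX_0^t)^*}\bigr).
\end{equation*}
The argument preceding \eqref{eq:form} shows that the first components agree over different $t$, so one defines $U_1(\omega,s)$ and $U_2(\omega,t)$ as in \eqref{eq:form}; measurability in $\omega$ follows from the measurability of the data together with the uniform bounded invertibility of $B_{\omega,t}$ (one can either verify it from the explicit variational problem or invoke a standard measurable selection since the solution is unique).

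Next I would square the pointwise bound and use $(a+b)^2\le 2a^2+2b^2$, giving
\begin{equation*}
\int_0^t\|U_1(\omega,s)\|_V^2\,\dd s+\|U_2(\omega,t)\|_H^2 \lesssim c_B^{-2}\bigl(\|\sF_{\omega,t}\|_{(\cX_0^t)^*}^2+\|\sW_{\omega,t}\|_{(\cX_0^t)^*}^2\bigr).
\end{equation*}
The left-hand side is monotone in $t$ (the first term by inclusion of the interval, the second by taking supremum), so taking $\sup_{t\in[0,T]}$ and then expectation yields
\begin{equation*}
\bE\Big[\int_0^T\|U_1(t)\|_V^2\,\dd t+\sup_{t\in[0,T]}\|U_2(t)\|_H^2\Big] \lesssim c_B^{-2}\bE\Big[\sup_{t\in[0,T]}\|\sF_{\cdot,t}\|_{(\cX_0^t)^*}^2+\sup_{t\in[0,T]}\|\sW_{\cdot,t}\|_{(\cX_0^t)^*}^2\Big].
\end{equation*}
The $\sF$-term is controlled by \eqref{eq:boundF} (which is itself monotone in $t$, so the supremum is harmless) giving $\bE\bigl[\int_0^T\|f(t)\|_{V^*}^2\,\dd t+\|U_0\|_H^2\bigr]$; the $\sW$-term is controlled via \eqref{boundstochintomegawise1}--\eqref{boundstochintmeansq2} of Lemma~\ref{lemmastochint}, producing $\bE\bigl[\int_0^T\|\Psi(t)Q^{1/2}\|_{\sL_2(H)}^2\,\dd t\bigr]$. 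Combining these gives the stated a priori bound and in particular the integrability $U_1\in L^2(\Omega\times(0,T);V)$.

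Finally, to upgrade $U_2$ from an element of $H$ for each $t$ to an element of $L^2(\Omega;\sC([0,T];H))$, I invoke Lemma~\ref{lemma:versions}: it identifies $U_2$ with the continuous $H$-valued variational solution $\cU$ of~\eqref{stocHeat}, so $U_2\in L^2(\Omega;\sC([0,T];H))$ and the $\sup_{t}$ on the left is legitimately over a continuous trajectory. Uniqueness is inherited from the $\omega$-wise uniqueness in \eqref{secondspacetimestoch} and the consistency relation \eqref{eq:form}. This completes the proof.
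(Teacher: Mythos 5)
Your proof is correct and follows essentially the same route as the paper: $\omega$-wise application of the inf-sup solvability with the uniform bound on $B_{\omega,t}^{-1}$, monotonicity in $t$ to pass to the supremum before taking expectations, the estimates \eqref{boundrhsmeansq} and \eqref{boundstochintmeansq2} for the data terms, and Lemma~\ref{lemma:versions} to identify $U_2$ as a continuous $H$-valued version. The only cosmetic difference is that your squaring produces $c_B^{-2}$ where the paper's statement and proof write $c_B^{-1}$; since $c_B$ is a fixed constant this is immaterial (and your exponent is arguably the more accurate one).
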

\begin{proof}
  In view of the $\omega$-wise invertibility of the operator
  $B_{\omega,t}$, and the bounds for $\sF_{\omega,t}$
  in~\eqref{eq:boundF} and $\sW_{\omega,t}$
  in~\eqref{boundstochintmeansq2},  we have that for fixed $\omega$ and for any
$t \in [0,T]$, there exists a unique solution to 
\eqref{secondspacetimestoch}, which satisfies the bound
\begin{equation*}
\begin{aligned}
&
\int_0^t{\norm{V}{U_1(\omega,s)}^2}\,\dd s 
+
\norm{H}{U_2(\omega,t)}^2 
\lesssim
c_B^{-1}
\Big(\norm{(\cX_0^t)^*}{\sF_{\omega,t}}^2 
+
\norm{(\cX_0^t)^*}{\sW_{\omega,t}}^2 \Big)
\\
&\qquad \lesssim c_B^{-1}\Big(\norm{L^2((0,t);V^*)}{f(\omega,\cdot)}^2 +
\norm{H}{U_0(\omega)}^2 +K(\omega,t)^2 \Big).
\end{aligned}
\end{equation*}
In view of \eqref{boundrhsmeansq} and \eqref{boundstochintmeansq2},
this leads to 
\begin{equation*}
\begin{aligned}
&\bE\Big[  \int_0^T{\norm{V}{U_1(t)}^2}\,\dd t +
\sup_{t\in[0,T]}\norm{H}{U_2(t)}^2 \Big] 
\\ & \qquad 
\lesssim 
c_B^{-1}\bE\Big[ \sup_{t\in[0,T]}
\norm{(\cX_0^t)^*}{{\sF}_{\cdot,t}}^2 
+
\sup_{t\in[0,T]} \norm{(\cX_0^t)^*}{{\sW}_{\cdot,t}}^2 \Big]
\\
&\qquad \lesssim
c_B^{-1}\bE\Big[ \norm{L^2((0,T);V^*)}{f}^2 + \norm{H}{U_0}^2 +
\int_0^T { \norm{\sL_2(H)}{\Psi(t)Q^{\frac12}}^2 }\,\dd t
\Big].
\end{aligned}
\end{equation*}
Together with Lemma~\ref{lemma:versions} this concludes the proof of
the theorem.
\end{proof}

In the remainder of the manuscript we will sometimes use 
the alternative notation $ U \in L^2(\Omega \times (0,T) ;V) \cap 
L^2(\Omega;\sC([0,T];H)) $, equivalent to $ U = (U_1,U_2)\in 
L^2(\Omega \times (0,T) ;V) \times L^2(\Omega;\sC([0,T];H)) $, where the two 
components of $U$ are now understood as versions of the same object. 

% \subsection{Connection with the mild solution}

%%%%%%%%%%%%%%%%%%%%%%%%%%%%%%%%%%%%%%%%%%%%%%%%%%%%%%%%%%%%%%%%%%%%%%%%%%%%%%%
%%%%%%%%%%%%%%%%%%%%%%%%%%%%%%%%%%%%%%%%%%%%%%%%%%%%%%%%%%%%%%%%%%%%%%%%%%%%%%%
%%%%%%%%%%%%%%%%%%%%%%%%%%%%%%%%%%%%%%%%%%%%%%%%%%%%%%%%%%%%%%%%%%%%%%%%%%%%%%%
%%%%%%%%%%%%%%%%%%%%%%%%%%%%%%%%%%%%%%%%%%%%%%%%%%%%%%%%%%%%%%%%%%%%%%%%%%%%%%%
%%
%%	 mild solution
%%
%%%%%%%%%%%%%%%%%%%%%%%%%%%%%%%%%%%%%%%%%%%%%%%%%%%%%%%%%%%%%%%%%%%%%%%%%%%%%%%
%%%%%%%%%%%%%%%%%%%%%%%%%%%%%%%%%%%%%%%%%%%%%%%%%%%%%%%%%%%%%%%%%%%%%%%%%%%%%%%
%%%%%%%%%%%%%%%%%%%%%%%%%%%%%%%%%%%%%%%%%%%%%%%%%%%%%%%%%%%%%%%%%%%%%%%%%%%%%%%
%%%%%%%%%%%%%%%%%%%%%%%%%%%%%%%%%%%%%%%%%%%%%%%%%%%%%%%%%%%%%%%%%%%%%%%%%%%%%%%
\subsection{Connection with the mild solution}
We have already shown that a weak space-time solution is a variational 
solution. If we assume that $-A$ is independent of $\omega$ and 
$t$ and hence
generates an analytic semigroup $(S(t))_{t\geq 0} $, we can also show that a 
weak space-time solution is a mild solution. The following theorem
holds:
\begin{theorem}
Let $U$ be the mild solution \eqref{mild} to the problem \eqref{stocHeat} and
assume that $(U_1,U_2(t)) \in \cY_0^t \times H$ is the weak space-time
solution to the same problem, i.e., the solution to 
\eqref{secondspacetimestoch}. Then, for any $t \in [0,T]$, $ \, U_1
\overset{\cY}{=} U$ and $U_2(t) \overset{H}{=} U(t)$.
\end{theorem}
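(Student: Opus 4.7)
The plan is to combine Lemma~\ref{lemma:versions}, which identifies the weak space-time solution with the variational solution, with the classical equivalence between variational and mild solutions in the time-independent, analytic-semigroup setting. Because the weak space-time solution is unique by Theorem~\ref{maintheorem}, it suffices to show that $U_2(t)\overset{H}{=}U(t)$ for every $t\in[0,T]$ and then to upgrade this to $U_1\overset{\cY}{=}U$.

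First, I would extract from Lemma~\ref{lemma:versions} the identity
\begin{equation*}
U_2(t) = U_0 + \int_0^t\big(-A\,U_1(s)+f(s)\big)\,\dd s + \int_0^t \Psi(s)\,\dd W(s),\quad \text{$\bP$-a.s.,}
\end{equation*}
together with the equality $U_1=U_2$ in $L^2((0,T);H)$, $\bP$-a.s. For fixed $t\in[0,T]$ I would then apply the infinite-dimensional It\^o formula to the process $s\mapsto S(t-s)U_2(s)$ on $[0,t]$, using that $-A$ generates the analytic semigroup $(S(t))_{t\geq 0}$ (so $\frac{\dd}{\dd s}S(t-s)=A\,S(t-s)$) and that $A$ commutes with $S(t-s)$ on its spectral expansion. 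The drift contributions $A\,S(t-s)U_2(s)$ and $-S(t-s)A\,U_1(s)$ then cancel in view of $U_1=U_2$, $\bP\otimes\dd t$-a.e., and integrating from $0$ to $t$ gives exactly
\begin{equation*}
U_2(t)=S(t)U_0+\int_0^t S(t-s)f(s)\,\dd s+\int_0^t S(t-s)\Psi(s)\,\dd W(s)=U(t).
\end{equation*}
The identity $U_1\overset{\cY}{=}U$ would then follow from $U_1\overset{L^2((0,T);H)}{=}U_2=U$ combined with $L^2(\Omega\times(0,T);V)$-regularity of the mild solution, which I would check by an eigenbasis/Parseval computation entirely parallel to the one carried out inside the proof of Lemma~\ref{lemmastochint}: one has $\int_0^T\norm{H}{A^{1/2}S(t)U_0}^2\,\dd t\le \tfrac12\norm{H}{U_0}^2$, the Duhamel term is handled as in that lemma, and the stochastic convolution is controlled in $L^2(\Omega;L^2((0,T);V))$ by It\^o's isometry together with $\int_0^r\lambda_j\ee^{-2\lambda_j(r-s)}\,\dd s\le \tfrac12$.

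The principal obstacle is the rigorous application of It\^o's formula to $S(t-s)U_2(s)$: the operator $A$ is unbounded and the drift of $U_2$ is only $V^*$-valued, so naively differentiating $S(t-s)U_2(s)$ requires pushing an unbounded operator under the integral. The safe route I would take is to project onto the orthonormal eigenbasis $\{\phi_j\}$ of $A$, derive a scalar It\^o equation for $u_j(s):=\inner{U_2(s)}{\phi_j}$, multiply by $\ee^{-\lambda_j(t-s)}$, invoke the stochastic Fubini theorem to interchange $\dd s$ and $\dd W(s)$ in the stochastic convolution, and reassemble the series via Parseval's identity. An equivalent shortcut would be to cite directly the variational--mild equivalence in \cite[Chap.~4]{PrevotRockner} or \cite{DapZab}, which applies verbatim because Lemma~\ref{lemma:versions} already exhibits $U_2$ as the variational solution of \eqref{stocHeat}.
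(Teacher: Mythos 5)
Your proof is essentially correct but follows a genuinely different route from the paper. You factor the statement through Lemma~\ref{lemma:versions}: the weak space-time solution is the variational solution, and the variational solution of a linear equation with time-independent generator is the mild solution (by It\^o's formula applied to $s\mapsto S(t-s)U_2(s)$, or by citation of \cite[Chap.~4]{PrevotRockner} or \cite{DapZab}). The paper instead argues directly by duality: it tests the weak formulation \eqref{secondspacetimestoch} with $x=v$, where $v$ solves the deterministic backward adjoint problem \eqref{adjequationstrongform} with arbitrary data $(g,\xi)$, substitutes $y=(U_1,U_2(t))$ into the variational form \eqref{firstbackward} of that backward problem, inserts the adjoint mild representation $v(s)=S^*(t-s)\xi+\int_s^t S^*(r-s)g(r)\,\dd r$, and after a stochastic Fubini manipulation reads off $\duals{U_1-U}{g}{\cY_0^t}{(\cY_0^t)^*}+\inner{U_2(t)-U(t)}{\xi}=0$ for all $(g,\xi)$. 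The paper's argument buys two things your route does not: it never needs to apply It\^o's formula to an expression involving the unbounded $A$ (all the unboundedness is absorbed into the well-posed backward problem), and it delivers the $\cY$-identification $U_1\overset{\cY}{=}U$ simultaneously with the $H$-identification, without a separate verification that the mild solution lies in $L^2(\Omega\times(0,T);V)$. Conversely, your route is shorter if one is willing to quote the classical variational--mild equivalence.

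One concrete caveat about your execution: the theorem only assumes that $-A$ is independent of $(\omega,t)$ and generates an analytic semigroup; it is not assumed self-adjoint (self-adjointness is introduced only in Section~\ref{Regularity}). Hence ``the orthonormal eigenbasis $\{\phi_j\}$ of $A$'' that you use both to make It\^o's formula rigorous and to check the $V$-regularity of the mild solution need not exist; the eigenbasis used in Lemma~\ref{lemmastochint} is that of the auxiliary self-adjoint operator $A_0$, not of $A$. Your projection argument and the bound $\int_0^T\norm{H}{A^{1/2}S(t)U_0}^2\,\dd t\le\tfrac12\norm{H}{U_0}^2$ therefore only cover the self-adjoint case as written; for general $A$ you would need either the Yosida-approximation version of the weak-to-mild argument (as in \cite{DapZab}) together with standard analytic-semigroup smoothing estimates, or simply the citation shortcut you propose, which does apply verbatim. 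The paper's proof sidesteps this entirely by working with $S^*$, the semigroup generated by $-A^*$.
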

\begin{proof}
For any $t\in[0,T]$ and for any $x\in 
\cX_0^t$, we have $\bP$-a.s. that
\begin{equation}\label{theoremequation}
\begin{aligned}
& \int_0^t \tdual{U_1(s)}{-\dot{x}(s) +A^*x(s)}\,\dd s +
\inner{U_2(t)}{x(t)} \\
&\qquad = \int_0^t {\dual{f(s)}{x(s)}}\,\dd s + \inner{U_0}{x(0)} +
\int_0^t{\inner{\Psi(s)\,\dd W(s)}{x(s)}}.
\end{aligned}
\end{equation}
We now choose test functions $x=v$, where $v \in \cX^t_0$ is the
solution to the deterministic backward equation
\eqref{adjequationstrongform} over the time interval $[0,t]$, with
arbitrary final data $\xi \in H$ and load function
$g \in L^2((0,t);V^*)$. Its variational formulation is given by
\eqref{firstbackward}, that is,
\begin{align}\label{theoremvarform}
&
\int_0^t{\tdual{y_1(s)}{-\dot{v}(s) + A^*v(s)}}\,\dd s 
+ \inner{y_2}{v(t)} 
\\ & \qquad
= \int_0^t{\tdual{y_1(s)}{g(s)}}\,\dd s + \inner{y_2}{\xi},
\end{align}
for all $y\in \cY_0^t \times H$. The solution is given by the mild
solution formula
\begin{equation}\label{mildformulatheorem}
v(s) = S^*(t-s)\xi + \int_s^t{S^*(r-s)g(r)\,\dd r},\quad s\in[0,t],
\end{equation}
where $S^*$ is the semigroup generated by $-A^*$, namely $S^*(s) = \ee^{-sA^*}$.
By substituting $x= v$ in \eqref{theoremequation} and $y=(U_1,U_2(t))$
in \eqref{theoremvarform}, we obtain
\begin{align*}
& \int_0^t{ \tdual{U_1(s)}{g(s)}} \,\dd s +
\inner{U_2(t)}{\xi} \\
&\quad= \int_0^t{ \dual{f(s)}{v(s)}} \,\dd s +
\inner{U_0}{v(0)} + \int_0^t{\inner{\Psi(s)\,\dd W(s)}{v(s)}},\\
\intertext{which, by \eqref{mildformulatheorem}, in its turn is equal to}
&\quad = \int_0^t{\dual{f(s)}{ S^*(t-s)\xi}}\,\dd s + \int_0^t
{\bigdual{f(s)}{\int_s^t {S^*(r-s)g(r)}\,\dd r}}\,\dd s \\
&\quad\quad + \inner{U_0}{S^*(t)\xi} + \biginner{U_0}{\int_s^t 
{S^*(r-s)g(r)}\,\dd
r}  \\ 
&\quad\quad +\int_0^t{\inner{\Psi(s)\,\dd W(s)}{S^*(t-s)\xi}}+ 
\int_0^t{\biginner{\Psi(s)\,\dd
W(s)}{\int_s^t {S^*(r-s)g(r)}\,\dd r}} .
\end{align*}
By manipulating the dual pairings in a suitable way, changing the
order of integration (using the stochastic version of Fubini's
theorem), and using the mild solution formula \eqref{mild}, we get
\begin{align*}
& \int_0^t {\tdual{U_1(s)}{ g(s)}}\,\dd s + \inner{U_2(t)}{
\xi} \\
&\quad = \biginner{S(t)U_0 + \int_0^t{S(t-s)f(s)}\,\dd s +
\int_0^t{S(t-s)}\Psi(s)\,\dd W(s)}{\xi} \\
&\quad\quad + \int_0^t {\bigtdual{S(s)U_0 + \int_0^s {S(s-r)f(r)}\,\dd r +
\int_0^s{S(s-r)\Psi(r)\,\dd W(r) }}{ g(s)}\,\dd s  } \\
&\quad = \inner{U(t)}{ \xi} + \int_0^t{\tdual{U(s)}{g(s)}}\,\dd s ,
\end{align*}
which reads
\begin{equation*}
\duals{U_1 - U}{g}{\cY_0^t}{(\cY_0^t)^*} + \inners{U_2(t) - U(t)}{ \xi}{H} =
0.
\end{equation*}
Since $(g,\xi)$ is arbitrary in $L^2((0,t);V^*) \times H$, and $t\in[0,T]$, it
follows that
\begin{align*}
U_1 \overset{\cY}{=} U, \ U_2(t) \overset{H}{=} U(t), 
\quad t \in
[0,T],\, \bP\mbox{{\rm -a.s}}.
\end{align*}
\end{proof}
\begin{remark}\label{remarksecondcomponent}
This is consistent with the fact that $U_1$ is a $V$-valued version of $U_2$
and that $U_2$ is a continuous $H$-valued function of time.
\end{remark}

\section{Regularity}\label{Regularity}
In this section we briefly investigate the regularity properties of
the weak space-time solution. In order to 
simplify the presentation, we
assume now that $A$ is independent of $\omega$ and $t$ and
self-adjoint in addition to \eqref{eq:bounds_on_A}.  

Then $-A$ is the generator of an analytic semigroup $S(t)=\ee^{-tA}$
and fractional powers $A^s$, $s\in\mathbb{R}$, of $A$ are well
defined.  We define norms of fractional order
$\norm{\dot{H}^{s}}{v}:=\norm{H}{A^{\frac s2}v}$ for $s\in\mathbb{R}$.
For $s\ge0$ we define the spaces $\dot{H}^s=D(A^{\frac s2})$ and for
$s\le 0$ we define $\dot{H}^s$ to be the closure of $H$ with respect
to the $\dot{H}^s$-norm.  These spaces are Hilbert spaces, in
particular, $\dot{H}^0=H$, $\dot{H}^1\simeq V$, and
$\dot{H}^{-s}=(\dot{H}^s)^*$.

For $\beta\ge0$, we then consider the spaces
\begin{align*}
\cYb &:= L^2((0,t);\dot{H}^{1+\beta}),\\
\cXb &:= L^2((0,t);\dot{H}^{1-\beta}) \cap H^1((0,t);\dot{H}^{-1-\beta}),
\end{align*}
normed by
\begin{align*}
\norm{\cYb}{y}^2 &:= \int_0^t{\norm{\dot{H}^{1+\beta}}{y(s)}^2}\,\dd s,\\
\norm{\cXb}{x}^2 &:= \int_0^t{\big(\norm{\dot{H}^{1-\beta}}{x(s)}^2 +
\norm{\dot{H}^{-1-\beta}}{\dot{x}(s)}^2\big)}\,\dd s + 
\norm{\dot{H}^{-\beta}}{x(0)}^2 + \norm{\dot{H}^{-\beta}}{x(t)}^2.
\end{align*}
The spaces in the previous sections correspond to $\beta =0$. In particular,
as before, we use the notation $\cYbT = \cY_0^{T,\beta}$ and $\cXbT =
\cX_0^{T,\beta}$. The space $ \cYb
\times \Hb$ endowed with its product norm $\norm{\cYb \times \Hb}{\cdot}$ and
the space $ \cXb$ endowed with the norm $\norm{\cXb}{\cdot}$ are Hilbert 
spaces.

There is a dense embedding $\cXbT \hookrightarrow
\sC([0,T];\dot{H}^{-\beta})$, i.e.,  for
any $ {x \in \cXbT }$,
$$\norm{\sC([0,T];\dot{H}^{-\beta})}{x} \leq
\norm{\cXbT}{x},$$ where the embedding constant is the same as in 
\eqref{eq:embedding_constant}. A proof of this fact can be found in 
\cite{DauLio,Lunardi}, and
relies on the properties of the interpolation space
$$(\dot{H}^{1-\beta},\dot{H}^{-1-\beta} )_{\frac{1}{2}} = \dot{H}^{-\beta}.$$

We introduce a new bilinear
form, $\sB_{t,\beta}^*$, given by the original one, $\sB_{t}^*$ with
constant operator $A$,
restricted to the newly introduced spaces, that is,
\begin{align*}
&\sB_{t,\beta}^* \colon (\cYb \times \Hb) \times \cXb
\rightarrow \bR,
\end{align*}
together with new load functionals,
\begin{align*}
\sF_{\omega,t,\beta} \colon \cXb \rightarrow \bR,  \quad 
\sW_{\omega,t,\beta} \colon \cXb \rightarrow \bR,
\end{align*}
given by $\sF_{\omega,t}$ and ${\sW}_{\omega,t}$ defined on the new
spaces introduced above.

The weak space-time formulation reads, for almost every $(\omega,t) \in 
\Omega\times [0,T]$:
\begin{equation} \label{secondspacetimestochreg}
\begin{aligned}
&U_{\beta}(\omega,t) \in \cYb \times \Hb \colon\\
&\sB_{t,\beta}^*(U_{\beta}(\omega,t),x) =\sF_{\omega,t,\beta}(x) + 
\sW_{\omega,t,\beta}(x), \quad \forall x \in \cXb.
\end{aligned}
\end{equation}
It is possible to prove that the conditions \eqref{BDD}, \eqref{BNB1A} and
\eqref{BNB2A} still hold, with the same constants $C_B$ and $c_B$ as before. The
proof of this follows from a straightforward modification of the proof for the
deterministic framework in \cite{SchwabSte} or \cite{Fra}, taking in account the
remarks made for its extension to the stochastic framework in Section
\ref{varformstoch}. It will therefore be omitted.

In the following lemma we give sufficient conditions on the load
functionals in order to have a unique solution. 

\begin{lemma} With the notation introduced above, the following facts hold true:
\begin{itemize}
 \item If $f \in\cY_0^{t,\beta-2}$ and $U_0 \in \Hb$, $\bP$-a.s., then
$\sF_{\cdot,t,\beta} \in
(\cXb)^*$, $\bP$-a.s.  
Moreover, if $f \in L^2(\Omega;\cY^{\beta-2})$ and
$U_0 \in L^2(\Omega;\Hb)$, then
\begin{align*}
\bE\Big[ \sup_{t\in[0,T]} \norm{ (\cXb)^*}{\sF_{\cdot,t,\beta}}^2 \Big] 
\lesssim
\bE\Big[
\norm{\cY^{\beta-2}_H}{f}^2 + \norm{\Hb}{U_0}^2 \Big].
\end{align*}

\item If $\Psi Q^{\frac{1}{2}} \in L^2( \Omega \times 
(0,T); \sL_2(H,\dot{H}^{\beta})$, then
${\sW}_{\cdot,t,\beta} \in (\cXb)^*$, $\bP$-a.s.  
Moreover, 
\begin{align*}
\bE\Big[ \sup_{t\in[0,T]}  \norm{ (\cXb)^*}{{\sW}_{\cdot,t,\beta}}^2
\Big] \lesssim \bE\Big[ 
\int_0^T{ \norm{\sL_2(H,\dot{H}^{\beta})}{\Psi(t)Q^{\frac12}}^2  }\,\dd t \Big].
\end{align*}
 \end{itemize}
\end{lemma}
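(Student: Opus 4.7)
The plan is to treat the two bullets separately: the first is a direct duality computation, while the second mirrors the proof of Lemma \ref{lemmastochint} in the fractional setting.

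For the first bullet, I would unfold $\sF_{\omega,t,\beta}(x) = \int_0^t \dual{f(\omega,s)}{x(s)}\,\dd s + \inner{U_0(\omega)}{x(0)}$ and reinterpret each term as a duality pairing between mutually dual fractional spaces. Since $\cY_0^{t,\beta-2} = L^2((0,t);\dot H^{\beta-1})$ and $\dot H^{\beta-1} = (\dot H^{1-\beta})^*$, Cauchy--Schwarz in time gives $|\int_0^t \dual{f(s)}{x(s)}\,\dd s| \le \norm{\cY_0^{t,\beta-2}}{f(\omega,\cdot)}\,\norm{L^2((0,t);\dot H^{1-\beta})}{x}$. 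For the initial term I would use $(\Hb)^* = \dot H^{-\beta}$ to extend $\inner{U_0}{x(0)}$ to the duality pairing, bounded by $\norm{\Hb}{U_0(\omega)}\,\norm{\dot H^{-\beta}}{x(0)}$. Both right-hand factors are controlled by $\norm{\cXb}{x}$, yielding $\sF_{\omega,t,\beta}\in(\cXb)^*$ $\bP$-a.s.\ together with an $\omega$-wise bound. Monotonicity of the norms in $t$, followed by taking expectations, produces the claimed $\bE[\sup_t \cdots]$ estimate in direct analogy with \eqref{boundrhsmeansq}.

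For the second bullet, since $A$ is now $\omega,t$-independent and self-adjoint, I would use the semigroup $S(t)=\ee^{-tA}$ and invoke the $\beta$-analog of the backward inverse: any $x\in\cXb$ admits the representation
\[
x(s) = \int_s^t S(r-s)\bigl(-\dot x(r) + A x(r)\bigr)\,\dd r + S(t-s)x(t),
\]
now with $-\dot x + Ax \in L^2((0,t);\dot H^{-1-\beta})$ and $x(t)\in\dot H^{-\beta}$. Substituting into $\sW_{\omega,t,\beta}(x)=(\int_0^t \inner{\Psi(s)\,\dd W(s)}{x(s)})(\omega)$ and applying the stochastic Fubini theorem (together with $S^* = S$), each of the two resulting pairings is between a stochastic convolution (valued in $\dot H^{1+\beta}$ respectively $\dot H^\beta$) and a deterministic object bounded by $\norm{\cXb}{x}$. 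Cauchy--Schwarz then yields
\[
\norm{(\cXb)^*}{\sW_{\omega,t,\beta}} \lesssim K_\beta(\omega,t),
\]
with
\[
K_\beta(\cdot,t)^2 := \int_0^t \Norm{\dot H^{1+\beta}}{\int_0^r S(r-s)\Psi(s)\,\dd W(s)}^2 \dd r + \Norm{\dot H^\beta}{\int_0^t S(t-s)\Psi(s)\,\dd W(s)}^2.
\]

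It remains to bound $\bE[\sup_t K_\beta(\cdot,t)^2]$ in terms of $\bE\int_0^T\norm{\sL_2(H,\dot H^\beta)}{\Psi(s)Q^{\frac12}}^2\,\dd s$. The first summand is handled by Itô's isometry followed by an eigenbasis expansion identical in structure to the long display in the proof of Lemma \ref{lemmastochint}, only with the weights $\lambda_j^{1/2}$ replaced by $\lambda_j^{(1+\beta)/2}=\lambda_j^{1/2}\cdot\lambda_j^{\beta/2}$, so that the $\lambda_j^{\beta}$-factor recombines into the $\dot H^\beta$-Hilbert--Schmidt norm of $\Psi Q^{1/2}$ while the remaining factor is absorbed by $\int_s^t \lambda_j \ee^{-\lambda_j(r-s)}\,\dd r\le \tfrac12$. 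For the second summand I would reproduce the integration-by-parts identity $v(t)=z(t) - A\int_0^t S(t-s)z(s)\,\dd s$ with $z(t)=\int_0^t \Psi(s)\,\dd W(s)$, now measuring everything in $\dot H^\beta$; the eigenbasis argument still gives $\Norm{\dot H^\beta}{A\int_0^t S(t-s)z(s)\,\dd s} \le \sup_{s\le t}\norm{\dot H^\beta}{z(s)}$, after which Doob's inequality on the $\dot H^\beta$-valued martingale $z$ combined with Itô's isometry closes the estimate.

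The only genuine point to verify carefully is that the representation of $x\in\cXb$ via the backward semigroup formula actually lands in $\cXb$, so that the substitution is legitimate. This is the fractional analog of what is spelled out in Lemma \ref{lemmastochint} and is proved the same way, by an eigenbasis/Parseval computation; once that is in place, the rest is bookkeeping of exponents.
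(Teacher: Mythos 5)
Your proposal is correct and follows essentially the same route as the paper, which declares the first bullet obvious and reduces the second to fractional-order analogues of the inequalities in Lemma~\ref{lemmastochint}; your write-up supplies exactly the details the paper leaves implicit (the $\beta$-shifted backward representation of $x\in\cXb$, stochastic Fubini, and the eigenbasis/Doob estimates). One small point in your favour: the norm you identify for the first stochastic convolution, $\dot{H}^{1+\beta}$, i.e.\ the weight $A^{\frac{1+\beta}{2}}$, is the one actually required by the duality against $-\dot{x}+Ax\in L^2((0,t);\dot{H}^{-1-\beta})$, whereas the paper's second displayed inequality writes $A^{\frac{\beta-1}{2}}$, which appears to be a typo.
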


\begin{proof}
The first statement is obvious. In order to prove the second one, one can use
the same
notation and techniques as in Section~\ref{varformstoch}, together with the
employment of the following inequalities to derive an analogue of Lemma
\ref{lemmastochint}:
\begin{align*}
\bE\Big[ \sup_{t\in[0,T]}  \Norm{H}{ A^{\frac{\beta}{2}}\int_0^t { 
S(t-s)\Psi(s)\,\dd
W(s)}}^2 \Big] \lesssim \bE\Big[  \int_0^T {
\norm{\sL_2(H)}{A^{\frac{\beta}{2}} \Psi(t) Q^{\frac12}}^2 }\,\dd t \Big]
\end{align*}
and 
\begin{align*}
\bE\Big[  \int_0^T{\Norm{H}{ A^{\frac{\beta-1}{2}} \int_0^r{ S(r-s)\Psi(s)\,\dd
W(s)}}^2} \,\dd
r  \Big] \lesssim \bE\Big[  \int_0^T { \norm{\sL_2(H)}{A^{\frac{\beta}{2}} 
\Psi(t) Q^{\frac12}}^2
}\,\dd t \Big].
\end{align*}
These two properties are direct generalizations of the ones presented
in Lemma \ref{lemmastochint}.
\end{proof}
The previous lemma, together with the initial remarks about the fulfilment of
the conditions \eqref{BDD}, \eqref{BNB1A}, and \eqref{BNB2A}, gives the
following result.
\begin{theorem}\label{regulartheorem}
  Let $\beta \geq 0$ and $f \in L^2(\Omega \times (0,T) ; 
\dot{H}^{\beta-1})$, $U_0 \in
  L^2(\Omega;\Hb)$, and $\Psi Q^{\frac{1}{2}} \in
  L^2(\Omega \times (0,T) ;\sL_2(H,\dot{H}^{\beta}))$. Then the problem
  \eqref{secondspacetimestochreg} has a unique solution $ U \in
  L^2(\Omega \times (0,T); \dot{H}^{\beta+1}) \cap L^2(\Omega;\sC([0,T];\Hb)) $ 
and its norm is
  bounded by
\begin{equation*}
\begin{aligned}
&\bE\Big[  \int_0^T{\norm{\dot{H}^{\beta+1}}{U_1(t)}^2}\,\dd t +
\sup_{t\in[0,T]}\norm{\dot{H}^{\beta}}{U_2(t)}^2 \Big] \\
&\qquad \qquad \lesssim c_B^{-1}\bE\Big[
\int_0^T{\norm{\dot{H}^{\beta-1}}{f(t)}^2}\,\dd t +
\int_0^T{\norm{\sL_2(H,\Hb)}{\Psi(t)Q^{\frac12}}^2}\,\dd t +
\norm{\dot{H}^{\beta}}{U_0}^2 \Big],
\end{aligned}
\end{equation*}
where the constant hidden in $\lesssim$ depends only on numerical 
factors.
\end{theorem}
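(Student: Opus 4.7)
The plan is to follow the proof of Theorem~\ref{maintheorem} almost verbatim in the fractional-order setting, leveraging the two ingredients already in place: the $\omega$-wise bounded invertibility of the inf-sup operator and the mean-square bounds on the two load functionals.

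First, I would invoke the remark preceding the lemma above: the bilinear form $\sB_{t,\beta}^*$ satisfies \eqref{BDD}, \eqref{BNB1A}, \eqref{BNB2A} uniformly in $(\omega,t)$ with the \emph{same} constants $C_B$, $c_B$ as in~\eqref{infsupconstants}. Hence the associated operator $B_{t,\beta}^* \in \cL(\cYb \times \Hb, (\cXb)^*)$ is boundedly invertible $\bP\otimes\dd t$-a.s., and the norm of $(B_{t,\beta}^*)^{-1}$ is bounded by $c_B^{-1}$ uniformly in $(\omega,t)$. Combined with the previous lemma, which asserts that $\sF_{\cdot,t,\beta} + \sW_{\cdot,t,\beta} \in (\cXb)^*$ $\bP$-a.s., the BNB theorem yields, $\omega$-wise and for every $t \in [0,T]$, a unique solution $U_{\beta}(\omega,t) = (U_1(\omega,t), U_2(\omega,t)) \in \cYb \times \Hb$ of \eqref{secondspacetimestochreg} satisfying the pathwise bound
\[
\int_0^t \norm{\dHb{\beta+1}}{U_1(\omega,s)}^2 \,\dd s + \norm{\Hb}{U_2(\omega,t)}^2 \lesssim c_B^{-1}\Big(\norm{(\cXb)^*}{\sF_{\omega,t,\beta}}^2 + \norm{(\cXb)^*}{\sW_{\omega,t,\beta}}^2\Big).
\]
Exactly as in the proof of Theorem~\ref{maintheorem}, I would then take the supremum over $t \in [0,T]$ (the left-hand side is monotone in $t$) and the expectation, and finally apply the two mean-square bounds from the preceding lemma to recover the estimate stated in the theorem.

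The step I expect to require the most care is the identification of $U = (U_1, U_2)$ as an element of $L^2(\Omega \times (0,T); \dHb{\beta+1}) \cap L^2(\Omega; \sC([0,T]; \Hb))$, in particular the $\Hb$-continuity of $U_2$, since the inf-sup machinery alone produces only $\sup_{t}\bE[\norm{\Hb}{U_2(t)}^2]$. I would reproduce the argument of Lemma~\ref{lemma:versions} in the shifted Gelfand triple $\dHb{\beta+1} \subset \Hb \subset \dHb{\beta-1}$: this is legitimate because $A$ is self-adjoint with a complete eigenbasis, so fractional powers of $A$ commute with the evolution. The hypothesis $\Psi Q^{\frac12} \in L^2(\Omega \times (0,T); \sL_2(H,\Hb))$ translates exactly into $A^{\beta/2}\Psi Q^{\frac12} \in L^2(\Omega \times (0,T); \sL_2(H))$, so that a variational solution in the shifted triple exists in the sense of Definition~\ref{varsol} and is, by construction, continuous into $\Hb$. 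Uniqueness of the weak space-time solution then identifies it with $U_2$ and enforces $U_1 = U_2$ in $L^2((0,T); \Hb)$ $\bP$-a.s., completing the proof.
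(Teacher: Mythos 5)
Your proposal is correct and follows essentially the same route as the paper: the paper derives Theorem~\ref{regulartheorem} by repeating the proof of Theorem~\ref{maintheorem} in the fractional-order spaces, combining the unchanged inf-sup constants with the preceding lemma's bounds on $\sF_{\cdot,t,\beta}$ and $\sW_{\cdot,t,\beta}$, and appealing to the analogue of Lemma~\ref{lemma:versions} for the continuity of $U_2$. Your explicit treatment of the shifted Gelfand triple $\dot{H}^{\beta+1}\subset\dot{H}^{\beta}\subset\dot{H}^{\beta-1}$ is a correct and slightly more detailed account of the step the paper leaves implicit.
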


 % \begin{remark}
 %   If the operator $A$ depends on $t$ and $\omega$, then one might
 %   define the spaces $\dot{H}^s$ by means of a fixed operator $\bA$,
 %   which is ``comparable'' to $A$ in such a way that the above
 %   arguments carry through.  We have failed to do so in the case of a
 %   time-dependent operator $A$, because unrealistic extra assumptions
 %   seem to be required.  However, when $A$ depends on $\omega$ only,
 %   then we only need to assume that there are two positive constants
 %   $\zeta_1,\zeta_2$ such that
 % \begin{align*}
 % \zeta_1 \leq \sup_{\omega \in \Omega} 
 % \norm{\sL(H;H)}{A^{\gamma}(\omega)\bA^{-\gamma}} \leq \zeta_2,
 % \quad \gamma \in \Big[-\frac{\beta+1}{2}, \frac{\beta+1}{2}\Big].  
 % \end{align*}
 % This seems more realistic.  We refrain from developing the details
 % here.
 % \end{remark}

%%%%%%%%%%%%%%%%%%%%%%%%%%%%%%%%%%%%%%%%%%%%%%%%%%%%%%%%%%%%%%%%%%%%%%%%%%%%%%%
%%
%%	 L I N E A R   M U L T I P L I C A T I V E   N O I S E
%%
%%%%%%%%%%%%%%%%%%%%%%%%%%%%%%%%%%%%%%%%%%%%%%%%%%%%%%%%%%%%%%%%%%%%%%%%%%%%%%%%
\section{Linear multiplicative noise}\label{linmultnoise}
In this section we use the theory developed in the previous sections to prove
existence and uniqueness to the weak space-time solution to the problem
\begin{equation}\label{multiplicativestrong}
\begin{aligned}
&\dd U(t) + A(t)U(t)\,\dd t = f(t)\,\dd t + (G(t)U(t))\,\dd W(t), \quad t \in
(0,T], \\
&U(0) = U_0. &&
\end{aligned}
\end{equation}
Here $G(\omega,t) \in \sL(H,\sL(H))$, with further assumptions on its
$(\omega,t)$-dependence to be specified below.
As we have done before, we introduce an $\omega$-wise weak formulation.
In order to do so we introduce a
new load functional $\sW^{v}_{\omega,t}$ defined by 
\begin{align*}
\sW_{\omega,t}^{v} \colon \cX_0^t \rightarrow \bR,\quad 
{\sW}^{v}_{\omega,t}(x) =
\Big(\int_0^t{\inner{(G(s)v(s))\,\dd W(s)}{x(s)}}\Big)(\omega),
\end{align*}
for $(\omega,t)\in \Omega\times [0,T]$ and $v \in \sS_T:=
L^2(\Omega;L^2((0,T);V)) \cap L^2(\Omega;\sC([0,T];H))$.  The weak
space-time formulation of problem \eqref{multiplicativestrong} reads
hence, for almost every $(\omega,t)\in \Omega\times [0,T]$, 
\begin{equation} \label{secondspacetimestochnonlfix}
U_{\omega,t} \in \cY_0^t \times H : \sB_{\omega,t}^*(U_{\omega,t},x) =
\sF_{\omega,t}(x) +\sW^{U_{\omega,t}}_{\omega,t}(x)  ,
\quad \forall x \in \cX_0^t.
\end{equation}
We use Banach's fixed point theorem for the linear operator $\sT\colon
v\mapsto U$ that maps $v\in\sS_T$ to the solution of of the problem
\begin{equation} \label{secondspacetimestochnonlfixaux}
U_{\omega,t} \in \cY_0^t \times H : \sB_{\omega,t}^*(U_{\omega,t},x) =
\sF_{\omega,t}(x) +\sW^{v}_{\omega,t}(x),
\quad \forall x \in \cX_0^t. 
\end{equation}
We will show that $\sT\colon\sS_T\to\sS_T$ is a contraction, if $T$ is
small.
We introduce the notation $\sL_2^0(H)$ for the space of operators $\Psi$ such
that 
\begin{align*}  %\label{eq:Hilbert_Schmidt_norm_20}
\norm{\sL_2^0(H)}{\Psi} := \norm{\sL_2(H)}{\Psi Q^{\frac12}}<\infty.
\end{align*}
We make the further assumption that $G$ is predictable, 
bounded with respect to $\omega$, and $L^p$ in time for some
$p>2$, i.e., for some constant $\kappa$, 
\begin{equation}\label{LpboundonPsi} \operatorname*{ess\
sup}_{\omega\in\Omega} \Big( \int_0^T{
\norm{\sL(H,\sL_2^0(H))}{G(\omega,t)}^p }\,\dd t \Big)^{1/p} \leq
\kappa.
\end{equation}
An example is presented in Remark~\ref{remark5} below. 

\begin{lemma}\label{inequalityonB}
For any $v\in\sS_T$ and $G$ as in \eqref{LpboundonPsi}, it holds that
\begin{equation*}
\begin{aligned}
\bE\Big[
\int_0^T{ \norm{\sL_2(H)}{(G(t)\,v(t))Q^{\frac12}}^2 }\,\dd t
 \Big] \le 
T^{\frac{p}{p-2}}  \kappa^2 
\norm{\sS_T}{ v}^2.
\end{aligned}
\end{equation*}
\end{lemma}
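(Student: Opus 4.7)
The plan is to exploit the boundedness of $G(t)$ as an operator from $H$ into $\sL_2^0(H)$, separate $G$ and $v$ by H\"older's inequality in time, and then take expectation. The hypothesis $p>2$ in \eqref{LpboundonPsi} is the decisive ingredient: it produces a positive power of $T$, which is exactly what will later make $\sT$ a contraction on sufficiently small time intervals.

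First, by the very definition of the operator norm on $\sL(H,\sL_2^0(H))$, one has the pointwise estimate
$$\norm{\sL_2(H)}{(G(\omega,t)\,v(\omega,t))Q^{\frac12}} = \norm{\sL_2^0(H)}{G(\omega,t)v(\omega,t)} \le \norm{\sL(H,\sL_2^0(H))}{G(\omega,t)}\,\norm{H}{v(\omega,t)}$$
for a.e.\ $(\omega,t)\in\Omega\times[0,T]$. Squaring and integrating reduces the lemma to controlling $\bE\big[\int_0^T\norm{\sL(H,\sL_2^0(H))}{G(t)}^2\norm{H}{v(t)}^2\,\dd t\big]$.

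Next, for $\omega$-a.s.\ fixed $\omega$, I would apply H\"older's inequality in time with the conjugate pair $(p/2,\,p/(p-2))$, which is admissible precisely because $p>2$. The first factor $\big(\int_0^T\norm{\sL(H,\sL_2^0(H))}{G(\omega,t)}^p\,\dd t\big)^{2/p}$ is bounded by $\kappa^2$ uniformly in $\omega$ by \eqref{LpboundonPsi}; the second factor $\big(\int_0^T\norm{H}{v(t)}^{2p/(p-2)}\,\dd t\big)^{(p-2)/p}$ is dominated by pulling out $\sup_{s\in[0,T]}\norm{H}{v(s)}^{2}$, which leaves an integral of $1$ contributing the desired positive power of $T$.

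Finally, taking expectation pulls $\kappa^2$ and the power of $T$ outside, and I would bound $\bE\big[\sup_{s\in[0,T]}\norm{H}{v(s)}^2\big]\le\norm{\sS_T}{v}^2$ using the $L^2(\Omega;\sC([0,T];H))$-component of the $\sS_T$-norm. I do not expect any substantive obstacle; the only point needing a line of care is the joint measurability/integrability justification for applying H\"older's inequality $\omega$-wise and then integrating in $\omega$, which is immediate from the predictability of $G$ and the fact that $v\in\sS_T$.
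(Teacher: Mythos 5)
Your proposal is correct and follows essentially the same route as the paper: the pointwise bound $\norm{\sL_2(H)}{(Gv)Q^{\frac12}}\le\norm{\sL(H,\sL_2^0(H))}{G}\,\norm{H}{v}$, H\"older in time with exponents $(p/2,\,p/(p-2))$, extraction of $\sup_{t}\norm{H}{v(t)}^2$, and the $L^2(\Omega;\sC([0,T];H))$ component of the $\sS_T$-norm; the only (immaterial) difference is that the paper pulls the supremum out before applying H\"older, whereas you apply H\"older to the product first. One small observation: both your computation and the paper's actually yield the power $T^{(p-2)/p}$ rather than the $T^{\frac{p}{p-2}}$ appearing in the statement --- an apparent typo in the exponent that is harmless for the subsequent contraction argument, since either power of $T$ is positive and tends to zero as $T\to0$.
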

\begin{proof} We use H\"older's inequality to get 
\begin{align*}
&\bE\Big[
\int_0^T{ \norm{\sL_2(H)}{(G(\cdot,t)\,v(\cdot,t))Q^{\frac12}}^2 }\,\dd t
 \Big] \leq \bE\Big[
\int_0^T{ \norm{\sL(H,\sL_2^0(H))}{G(\cdot,t)}^2 \norm{H}{v(\cdot,t)}^2 } \,\dd 
t
 \Big] \\
  &\qquad \leq \bE\Big[ \sup_{t\in[0,T]}{  \norm{H}{v(\cdot,t)}^2 }
\Big(\int_0^T{ \norm{\sL(H,\sL_2^0(H))}{G(\cdot,t)}^2 } \,\dd t \Big)
 \Big] \\
 &\qquad \leq \bE\Big[ \sup_{t\in[0,T]}{  \norm{H}{v(\cdot,t)}^2 }\Big]\,
T^{\frac{p}{p-2}}
\operatorname*{ess\ sup}_{\omega\in\Omega}
\Big(\int_0^T{ \norm{\sL(H,\sL_2^0(H))}{G(\omega,t)}^{p} } \,\dd t
\Big)^{\frac{2}{p}}
 \\
 &\qquad \leq T^{\frac{p}{p-2}} \kappa^2 \, \norm{\sS_T}{ v}^2,
\end{align*}
where in the last line we used \eqref{LpboundonPsi}. 
\end{proof}

By combining Lemmas \ref{inequalityonB} and \ref{lemmastochint}, with
$\Psi=Gv$, we see that ${\sW}^{v}_{\omega,t}\in (\cX_0^t)^*$ and
\begin{equation*}
\bE\Big[ 
\sup_{t\in[0,T]} \norm{(\cX_0^t)^*}{{\sW}^{v}_{\omega,t}}^2
\Big]
\lesssim 
\bE\Big[ \int_0^T{\norm{\sL_2(H)}{(G(t)v(t))Q^{\frac12}}^2}\,\dd t\Big] 
\lesssim 
T^{\frac{p}{p-2}}\,\norm{\sS_T}{ v}^2.  
\end{equation*}
If $U_0 \in L^2(\Omega;H)$, $f \in L^2(\Omega \times (0,T) ;V^*)$,
$Q^{\frac{1}{2}} \in \sL_2(H)$, then we may refer to
Theorem~\ref{maintheorem} to conclude that
\eqref{secondspacetimestochnonlfixaux} has a unique solution with
\begin{equation*}
\bE\Big[  \int_0^T{\norm{V}{U_1}^2}\,\dd t 
+
\sup_{t\in[0,T]}\norm{H}{U_2}^2 \Big] 
\lesssim \bE\Big[
\int_0^T{\norm{V^*}{f}^2}\,\dd t 
+
\norm{H}{U_0}^2 \Big]
+
T^{\frac{p}{p-2}}\,\norm{\sS_T}{ v}^2.  
\end{equation*}
Hence, the solution operator $\sT$ maps $\sS_T$ to itself. An
application of the previous bound with $f=0$, $U_0=0$ shows that it is
a contraction, if $T$ is small.  We thus have a unique solution on some
short interval $[0,T_0]$ and, since the interval of existence does not
depend on the size of the data $f, U_0$, we may repeat the argument
and extend it to $[T_0,2T_0]$, $[2T_0,3T_0]$, and so on until we
obtain a solution on $[0,T]$. 

We summarize the result in the following theorem:

\begin{theorem}[Existence and uniqueness]
  If $U_0 \in L^2(\Omega;H)$, $f \in L^2(\Omega \times (0,T) ;V^*)$,
  and $ G \in L^{\infty}(\Omega;L^p((0,T);\sL(H,\sL_2^0(H))))$ for
  some $p>2$, see \eqref{LpboundonPsi}, then
\eqref{secondspacetimestochnonlfix} has a unique solution 
  $ U \in L^2(\Omega \times (0,T) ;V) \cap L^2(\Omega;\sC([0,T];H)) $.
\end{theorem}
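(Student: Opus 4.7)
The plan is to apply Banach's fixed point theorem to the linear map $\sT\colon v \mapsto U$ introduced in the discussion preceding the theorem, where $U=\sT v$ is the unique solution of the auxiliary linear problem~\eqref{secondspacetimestochnonlfixaux}, in which $v$ enters only through the noise load functional $\sW^{v}_{\omega,t}$. First I verify that $\sT$ is a well-defined self-map of $\sS_T$. Given $v \in \sS_T$, Lemma~\ref{inequalityonB} (applied with $\Psi = Gv$) combined with Lemma~\ref{lemmastochint} shows that $\sW^{v}_{\omega,t} \in (\cX_0^t)^*$ $\bP$-a.s., with
\begin{equation*}
\bE\Big[ \sup_{t\in[0,T]} \norm{(\cX_0^t)^*}{{\sW}^{v}_{\cdot,t}}^2 \Big] \lesssim T^{\frac{p}{p-2}} \kappa^2 \norm{\sS_T}{v}^2.
\end{equation*}
Applying Theorem~\ref{maintheorem} $\omega$-wise to~\eqref{secondspacetimestochnonlfixaux} then produces a unique $U = \sT v$ and the bound
\begin{equation*}
\norm{\sS_T}{\sT v}^2 \lesssim \bE\Big[ \int_0^T \norm{V^*}{f}^2\,\dd t + \norm{H}{U_0}^2 \Big] + T^{\frac{p}{p-2}} \kappa^2 \norm{\sS_T}{v}^2,
\end{equation*}
which in particular places $\sT v$ in $\sS_T$.

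Since the auxiliary problem is linear in $(U_0,f,v)$, applying the same bound to the difference $\sT v_1 - \sT v_2$, which solves~\eqref{secondspacetimestochnonlfixaux} with $f = 0$, $U_0 = 0$ and noise driven by $v_1 - v_2$, gives
\begin{equation*}
\norm{\sS_T}{\sT v_1 - \sT v_2}^2 \lesssim T^{\frac{p}{p-2}} \kappa^2 \norm{\sS_T}{v_1 - v_2}^2.
\end{equation*}
Because the exponent $p/(p-2)$ is strictly positive, I can choose $T_0 \in (0,T]$ so small that the prefactor is strictly less than $1$, after absorbing the implicit numerical constant and $c_B^{-1}$. On this shortened interval $\sT$ becomes a strict contraction on $\sS_{T_0}$, and Banach's theorem produces a unique fixed point, which by construction is the unique weak space-time solution of~\eqref{secondspacetimestochnonlfix} on $[0,T_0]$.

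To pass from $[0,T_0]$ to $[0,T]$, I observe that the threshold $T_0$ depends only on $\kappa$, $p$, and the inf-sup constant $c_B$, and not on the size of $U_0$ or $f$. Hence I can restart the fixed point argument on $[T_0, 2T_0]$ using the initial datum $U_2(T_0) \in L^2(\Omega, \Sigma_{T_0}; H)$ and $f$, $G$ restricted to that interval, and iterate on $[2T_0, 3T_0]$, and so on; finitely many steps cover $[0,T]$, and concatenating the local fixed points yields the global solution $U \in \sS_T$. Uniqueness on $[0,T]$ follows because any two solutions coincide on each subinterval by the local contraction argument.

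The main obstacle I foresee lies not in the contraction estimate itself but in confirming that the stochastic integral $\sW^{U}_{\omega,t}$ makes sense for the fixed point and is consistent across the patching times $T_0, 2T_0, \dots$. This requires that $GU$ is predictable and lies in $L^2(\Omega \times (0,T); \sL_2(H))$, which in turn rests on the predictability of $G$ together with progressive measurability of the solution component $U_2$. The latter must be inherited from the $\omega$-wise inversion $B_{\omega,t}^{-1}$ in Theorem~\ref{maintheorem}, and is the one place where a little care beyond a purely analytic fixed point argument is needed.
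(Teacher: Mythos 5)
Your proposal is correct and follows essentially the same route as the paper: the contraction map $\sT$ on $\sS_T$ via Lemmas~\ref{inequalityonB} and~\ref{lemmastochint} together with Theorem~\ref{maintheorem}, smallness of $T$ for the contraction, and restarting on $[T_0,2T_0],[2T_0,3T_0],\dots$ since the existence interval is independent of the size of the data. The measurability caveat you raise at the end is a fair point but does not alter the argument, which matches the paper's.
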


\begin{remark}
  This approach extends easily to a semilinear equation of the form 
  \begin{align*}
    \dd U(t) + A(t)U(t)\,\dd t = F(t,U(t))\,\dd t + G(t,U(t))\,\dd W(t)
  \end{align*}
under appropriate global Lipschitz assumptions on the nonlinear
operators $F$, $G$. 
\end{remark}
\begin{remark}
Under the assumptions of Section \ref{Regularity} and,  for some $p>2$, 
\begin{equation*}
\operatorname*{ess\ sup}_{\omega\in\Omega}
\Big(\int_0^T{  \norm{\sL(\Hb,\sL_2^0(\Hb))}{G(\omega,t)}^p}
\Big)^{1/p}\,\dd t 
\leq \kappa,
\end{equation*}
we may extend the regularity result of Theorem \ref{regulartheorem} to
\eqref{secondspacetimestochnonlfix}.
\end{remark}

\begin{remark}   \label{remark5}
  We present an example of an operator satisfying \eqref{LpboundonPsi}.
  Let $H=L^2(D)$ and define, for all $v,w\in H$ and for some function
  $g$, 
  \begin{align*}
    \Big((G(\omega,t)v)w\Big)(\xi)=g(\omega,t,\xi)v(\xi)w(\xi) , \quad \xi\in 
D. 
  \end{align*}
  Let $\{e_j\}_{j=1}^\infty\subset H$ be an ON basis such that
  $\sup_{j\ge1}\norm{L^\infty(D)}{e_j}\le C$.  This can be
  achieved, for example, when $D$ is a parallellogram in $\bR^d$.
  Then construct $Qv=\sum_{j=1}^\infty \gamma_j \inner{v}{e_j}e_j$,
  where the eigenvalues $\{\gamma_j\}_{j=1}^\infty$ are chosen so that
  $\sum_{j=1}^\infty\gamma_j=\norm{\sL_2(H)}{Q^{\frac12}}^2<\infty$.
  Then
\begin{align*}
&  \norm{\sL_2^0(H)}{G(\omega,t)v}^2 
= \norm{\sL_2(H)}{(G(\omega,t)v)Q^{\frac12}}^2 
= \sum_{j=1}^\infty \norm{H}{(G(\omega,t)v)Q^{\frac12}e_j}^2 
\\ & \quad
= \sum_{j=1}^\infty\gamma_j \norm{H}{(G(\omega,t)v)e_j}^2 
= \sum_{j=1}^\infty\gamma_j \norm{L^2(D)}{g(\omega,t,\cdot)ve_j}^2 
\\ &  \quad
\le \sum_{j=1}^\infty\gamma_j \norm{L^\infty(D)}{g(\omega,t,\cdot)}^2 
\norm{L^2(D)}{v}^2 \norm{L^\infty(D)}{e_j}^2 
\lesssim 
     \norm{L^\infty(D)}{g(\omega,t,\cdot)}^2 \norm{L^2(D)}{v}^2, 
\end{align*}
Therefore, 
\begin{align*}
  \norm{\sL(H,\sL_2^0(H))}{G(\omega,t)} \lesssim 
  \norm{L^\infty(D)}{g(\omega,t,\cdot)} 
\end{align*}
and \eqref{LpboundonPsi} follows if we assume that $g\in
L^\infty(\Omega;L^p((0,T);L^\infty(D)))$. 
\end{remark}
\paragraph{Acknowledgement}  The authors would like to thank the
anonymous referee for the constructive criticism.

\bibliographystyle{siam}  
\bibliography{biblionew}

\end{document}